\theoremstyle{plain}
\newtheorem{theorem}{Theorem}[section]
\newtheorem{lemma}[theorem]{Lemma}
\newtheorem{corollary}[theorem]{Corollary}
\newtheorem{conjecture}[theorem]{Conjecture}
\theoremstyle{definition}
\newtheorem{remark}[theorem]{Remark}
\theoremstyle{definition}
\def\fnum{equation}
\numberwithin{equation}{section}
\begin{document}
\title
[Volume preserving stable]
{Isoperimetry and volume preserving stability  in real projective spaces}
\author{Celso Viana}

\address{\begin{flushright}
		Institute for Advanced Study  \\ School of Mathematics 
		\\
		1 Einstein Drive \\ Princeton NJ 08540 \\ USA
\end{flushright}}

\email{celsocsv@ias.edu
}

\address{\begin{flushright} Current Address of C. Viana\\
		Departament of Mathematics
		\\
		Universidade Federal de Minas Gerais \\ Belo Horizonte - MG 30123-970\\  Brazil
	\end{flushright}
} 

\email{celso@mat.ufmg.br}

\begin{abstract}
 We classify the  volume preserving stable  hypersurfaces in  the real   projective space $\mathbb{RP}^n$.   As a consequence, the solutions of the isoperimetric problem  are tubular neighborhoods of projective subspaces $\mathbb{RP}^k\subset \mathbb{RP}^n$ (starting with points). This  confirms a conjecture of Burago and  Zalgaller from 1988  and extends to higher dimensions previous result of M. Ritor\'{e} and A. Ros  on $\mathbb{RP}^3$.  We also derive an Willmore   type inequality for antipodal invariant hypersurfaces in  $\mathbb{S}^n$.
\end{abstract}

\maketitle

\section{Introduction}

\noindent The isoperimetric problem     looks for the   regions that minimize   perimeter among   compact sets  of a given fixed volume. The solutions are called isoperimetric regions and their boundaries isoperimetric hypersurfaces. The    isoperimetric inequality   beautifully describes    geodesic balls   as the only solutions  of the problem in the Euclidean space $\mathbb{R}^{n+1}$. The  classical proof   based on  symmetrization methods   works also in the  hyperbolic space $\mathbb{H}^{n+1}$ and in the round sphere $\mathbb{S}^{n+1}$.

Due to its variational nature, existence and regularity are relevant parts of the  problem. The   works of Almgren \cite{A} and  Schoen and Simon  \cite{SS} (see also Morgan \cite{M})  give a  satisfactory analytical description: when $M^{n+1}$ is closed or homogeneous,   isoperimetric hypersurfaces exist and are smooth except for a closed set of Hausdorff dimension $n-7$. The regular  part is a volume preserving stable  constant mean curvature hypersurface. In \cite{BC,BCE} Barbosa, do Carmo and Eschenburg  introduced   the notion of being volume preserving stable   in the broader class of  immersed  constant mean curvature hypersurfaces  and proved that only geodesic balls are stable  in the simply connected space forms.

In \cite{RR} Ritor\'{e} and Ros  studied volume preserving stable surfaces in  non-simply connected $3$-dimensional   space forms. Exploiting the Hopf quadratic differential of the surfaces, they proved that   volume preserving stable tori are flat. Moreover, using  the Hersch-Yau trick for constructing meromorphic maps, they also showed that sphere and tori are the only possible topologies for a  volume preserving stable surface in the real projective space $\mathbb{RP}^3$.  As a result,  the solutions of the isoperimetric problem  are   either  geodesic spheres or tubes about closed geodesics.  Inspired by this result, one can consider the projective spaces $\mathbb{P}^{n}(\mathbb{K})$, over the field $\mathbb{K}\in\{\mathbb{R}, \mathbb{C},\mathbb{H}\}$,  the most appealing spaces  one would like the isoperimetric problem solved. The following  conjecture, for $\mathbb{K}=\mathbb{R}$,  was posed   by Burago and Zalgaller  in 1988:
\begin{conjecture}[Berger \cite{B}, Burago and Zalgaller \cite{BZ}, and Ros \cite{Ro3}]\label{conjecture}
	The  isoperimetric regions  in the projective spaces $\mathbb{P}^{n+1}(\mathbb{K})$ are  tubular neighborhoods of projective subspaces $\mathbb{P}^k(\mathbb{K})\subset \mathbb{P}^{n+1}(\mathbb{K})$.
\end{conjecture}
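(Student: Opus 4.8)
\emph{Proof proposal (case $\mathbb{K}=\mathbb{R}$).}

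The plan is to reduce Conjecture \ref{conjecture} to a classification of volume preserving stable hypersurfaces in $\mathbb{RP}^{n+1}$ and then to single out the isoperimetric ones among the candidates. By the existence and regularity theory recalled above, for every $0<v<\Vol(\mathbb{RP}^{n+1})$ there is an isoperimetric region whose boundary $\Sigma$ is, away from a closed singular set of Hausdorff dimension at most $n-7$, a smooth embedded hypersurface of constant mean curvature $H$ which is \emph{volume preserving stable}: since $\mathrm{Ric}_{\mathbb{RP}^{n+1}}(\nu,\nu)=n$, this means $\int_\Sigma|\nabla f|^2\ge\int_\Sigma(|A|^2+n)f^2$ for every $f\in C^\infty(\Sigma)$ with $\int_\Sigma f=0$. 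So it suffices to (i) classify such hypersurfaces and (ii) decide which one realizes the least area for each enclosed volume. The singular set I would dispose of a posteriori: once the regular part is seen to lie in one of the real-analytic, rotationally symmetric models below, it extends smoothly across, forcing the singular set to be empty.

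To exploit the round geometry I would lift under the double cover $\pi\colon\mathbb{S}^{n+1}\to\mathbb{RP}^{n+1}$: $\widetilde\Sigma:=\pi^{-1}(\Sigma)\subset\mathbb{S}^{n+1}$ is a closed embedded hypersurface invariant under the antipodal map $\alpha$, locally isometric to $\Sigma$ and of the same mean curvature $H$, and functions on $\Sigma$ correspond to $\alpha$-invariant functions on $\widetilde\Sigma$; stability becomes $\int_{\widetilde\Sigma}|\nabla f|^2\ge\int_{\widetilde\Sigma}(|A|^2+n)f^2$ for all $\alpha$-invariant $f$ with $\int_{\widetilde\Sigma}f=0$. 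Viewing $\widetilde\Sigma\subset\mathbb{S}^{n+1}\subset\mathbb{R}^{n+2}$, the building blocks are the height functions $\langle x,a\rangle$ and support functions $\langle N,a\rangle$ ($a\in\mathbb{R}^{n+2}$, $N$ the Gauss map in $\mathbb{S}^{n+1}$), which for a CMC hypersurface satisfy $\Delta\langle x,a\rangle=-n\langle x,a\rangle+nH\langle N,a\rangle$ and $\Delta\langle N,a\rangle=-|A|^2\langle N,a\rangle+nH\langle x,a\rangle$, together with $\nabla\langle N,a\rangle=-A(a^{\top})$ (so that $\sum_i|\nabla\langle N,e_i\rangle|^2=|A|^2$); integrating the first identity over $\widetilde\Sigma$ also gives $\int_{\widetilde\Sigma}\langle x,a\rangle=H\int_{\widetilde\Sigma}\langle N,a\rangle$ for all $a$.

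The heart of the proof is then a test-function argument in the spirit of Barbosa--do Carmo--Eschenburg and of Ros. The linear functions above are $\alpha$-odd, so they do not descend, but their $\alpha$-invariant quadratic combinations do --- equivalently the matrix-valued maps $xx^{\top}$, $NN^{\top}$, $xN^{\top}+Nx^{\top}$, which are genuine functions on $\Sigma$. After balancing their barycenters into a normal form --- using the ambient rotations $SO(n+2)$ and the linear relations above, so that the normalized entries have zero mean --- I would feed them into the stability inequality. Summing and invoking the Gauss-map identities, the curvature terms should collapse into a single sharp scalar inequality: a Willmore-type lower bound for $\int_{\widetilde\Sigma}(1+H^2)$, equivalently for the area of $\widetilde\Sigma$ at fixed $H$ (this is the antipodal Willmore inequality advertised in the abstract). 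Its equality case, traced back through the Cauchy--Schwarz and stability steps, should force the shape operator to split orthogonally and parallelly with at most two distinct constant principal curvatures; hence $\widetilde\Sigma$ is totally umbilic or a generalized Clifford hypersurface $\mathbb{S}^{k}(r)\times\mathbb{S}^{n-k}\!\bigl(\sqrt{1-r^2}\bigr)$, and $\alpha$-invariance identifies it with the $\pi$-preimage of the boundary of a tubular neighborhood $T_\rho(\mathbb{RP}^{k})$ of a projective subspace, the case $k=0$ being the geodesic spheres. This gives the classification. With it in hand, I would compute the explicit volumes and areas of these one-parameter families, compare them, and verify that as $v$ increases from $0$ to $\tfrac12\Vol(\mathbb{RP}^{n+1})$ the least-area model is successively the geodesic ball, the tube about $\mathbb{RP}^{1}$, $\dots$, the tube about $\mathbb{RP}^{n}$; since by (i) the isoperimetric boundary is one of them, this proves the conjecture for $\mathbb{K}=\mathbb{R}$.

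The main obstacle is the sharp inequality together with its rigidity. Unlike the Barbosa--do Carmo--Eschenburg setting in $\mathbb{S}^{n+1}$, where only umbilic hypersurfaces are volume preserving stable, here the inequality must be calibrated so that \emph{both} the geodesic spheres and the two-principal-curvature Clifford-type tubes saturate it; moreover the Hersch-type balancing only diagonalizes, rather than isotropizes, the barycenter matrices, and controlling the resulting anisotropy --- possibly after composing with the standard minimal embedding $\mathbb{RP}^{n+1}\hookrightarrow\mathbb{S}^{N}$ and using its conformal group --- is the delicate analytic point. Handling the singular set in dimensions $\ge 8$ is a secondary technicality, settled once rotational symmetry of the regular part is in place.
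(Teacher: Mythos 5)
Your overall scaffolding is right and does track the paper: lift to $\mathbb{S}^{n+1}$ via the double cover, test the stability inequality with antipodal-invariant quadratic combinations of $\langle x,a\rangle$ and $\langle N,a\rangle$, extract a rigidity statement, classify the competitors as geodesic spheres and Clifford tubes, push past the singular set with a Morgan--Ritor\'e cut-off, and finish by comparing areas in each one-parameter family of models. Your candidate test objects $xx^{\top}$, $NN^{\top}$, $xN^{\top}+Nx^{\top}$ are indeed the raw material; the paper packages them (for each pair $a,b$) into the vector field $\Phi_{a,b}=-\langle x,a\rangle x+\langle N,a\rangle N+\langle x,b\rangle N$.

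There are two genuine gaps, both at the ``balance the barycenter'' step that you yourself flag as the delicate point. First, ambient rotations $SO(n+2)$ together with the linear relation $\int\langle x,a\rangle=H\int\langle N,a\rangle$ are not enough to make the quadratic test functions have zero mean while keeping the second variation $\le 0$: for each trial vector $a$ you must also be free to choose $b$ independently, and the right normal form requires an extra scalar degree of freedom. The paper does this by setting $b=\beta\varphi(a)$ for $\varphi\in SO(n+2)$ and $\beta\in\mathbb{R}$, showing that the map $b\mapsto\int_\Sigma\langle x,b\rangle N$ is an isomorphism (crucially using the antipodal invariance and the ``not totally geodesic'' hypothesis), deducing a Palais--Smale condition at $|\beta|\to\infty$, and producing a critical point of $F(\varphi,a,\beta)=Q_{\beta\varphi}(a,a)$ by a finite-dimensional mountain-pass argument; the critical-point equation is precisely $\int_\Sigma\Phi_{a_0,\beta_0\varphi_0(a_0)}=0$. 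Nothing in your sketch substitutes for this; the conformal-group/Hersch trick you mention does not produce a compatible one-parameter balancing of $b$ against $a$ and is not used in the paper. Second, your plan to ``sum and collapse into a Willmore-type inequality'' does not lead anywhere for the classification: summing the quadratic form over an orthonormal frame gives exactly $0$ (that is how one sees the first eigenvalue of $Q$ is non-positive), not a lower bound on $\int(1+H^2)^{n/2}$. The antipodal Willmore inequality in the paper is a \emph{separate} theorem, proved by sweeping out with parallel hypersurfaces and comparing with the Almgren--Pitts width of $\mathbb{RP}^{n+1}$ (via Zhou and do Carmo--Ritor\'e--Ros), and its equality case would only pin down the single balanced Clifford hypersurface $C_{\lfloor n/2\rfloor,\lceil n/2\rceil}$, not the whole family of volume preserving stable hypersurfaces. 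What actually produces rigidity in the paper is that the balanced null direction forces $L\Phi_{a_0,b_0}=c$ for a constant vector $c$, which in turn forces $r(x)=|A|^2-n$ to be constant and then $A^2-\beta_0 A-\mathrm{Id}=0$ as a tensor, so $\Sigma$ is isoparametric with at most two principal curvatures. You state the conclusion of this rigidity but give no mechanism for it.
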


Our main result address the real projective space of any dimension:

\begin{theorem}\label{main theorem}
If $\Sigma$ is a  compact two-sided volume preserving stable hypersurface  in $\mathbb{RP}^{n+1}$, then $\Sigma$ is  either a  geodesic sphere,   a quotient of a Clifford hypersurface $\mathbb{S}^{n_1}(R_1)\times \mathbb{S}^{n_2}(R_2)
\subset \mathbb{S}^{n+1}$, or  a two-fold covering of the  projective subspace $\mathbb{RP}^n$.
\end{theorem}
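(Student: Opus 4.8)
The plan is to lift everything to the universal cover $\mathbb{S}^{n+1}$. A compact two-sided volume preserving stable hypersurface $\Sigma \subset \mathbb{RP}^{n+1}$ pulls back under the double cover $\pi\colon \mathbb{S}^{n+1}\to\mathbb{RP}^{n+1}$ to an antipodally invariant constant mean curvature hypersurface $\tilde\Sigma = \pi^{-1}(\Sigma)$; this $\tilde\Sigma$ is either connected (when $\Sigma$ is one-sided in the naive sense but two-sided as an abstract hypersurface carrying the pulled-back normal, i.e.\ $\pi|_{\tilde\Sigma}$ is the orientation double cover) or a disjoint union of two antipodal copies of $\Sigma$. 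In either case, I would first show that $\tilde\Sigma$ inherits a stability property: test functions on $\Sigma$ lift to antipodally \emph{invariant} (even) test functions on $\tilde\Sigma$, and these form the admissible cone for a "symmetric" volume preserving stability on $\tilde\Sigma$. So $\tilde\Sigma$ is a CMC hypersurface in $\mathbb{S}^{n+1}$ that is stable with respect to the restricted class of even mean-zero variations. The point is that this restricted stability is still strong enough to run the Barbosa--do Carmo--Eschenburg type argument.

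The next and central step is to choose the right test function. On $\mathbb{S}^{n+1}\subset\mathbb{R}^{n+2}$ the classical move is to use the coordinate functions $\langle x, a\rangle$ restricted to $\Sigma$, which satisfy $\Delta_\Sigma \langle x,a\rangle + (|A|^2 + \mathrm{Ric})\langle x,a\rangle = -(n+1)H\langle \nu,a\rangle$ or similar, and to combine them with the constant function to kill the mean-curvature term and produce a good trial function for the stability quadratic form. The obstacle in our setting is parity: $\langle x,a\rangle$ is \emph{odd} under $x\mapsto -x$, hence does \emph{not} descend to $\mathbb{RP}^{n+1}$ and is not an admissible test function for the symmetric stability. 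I would get around this by using instead the quadratic coordinate functions, i.e.\ the components of the traceless part of $x\otimes x$ (equivalently, first eigenfunctions of the sphere of the \emph{second} kind, restricted to $\Sigma$), or the support-function-squared type expressions $\langle x,a\rangle^2 - \tfrac{1}{n+2}|x|^2$ — these are even, descend to $\mathbb{RP}^{n+1}$, and their Laplacians on a CMC $\Sigma$ still feed usefully into the Jacobi operator. Averaging over $a$ (taking the trace over an orthonormal basis) and plugging into the stability inequality should force a pointwise inequality on $|A|^2$ and $H$.

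From that inequality I expect to land in one of the rigidity cases: either the umbilic case $|A|^2 = nH^2/(\text{stuff})$, forcing $\tilde\Sigma$ to be a round sphere — which, being antipodally invariant, is either a great sphere (descending to $\mathbb{RP}^n$, the two-fold covering case) or a small geodesic sphere (descending to a geodesic sphere in $\mathbb{RP}^{n+1}$) — or the borderline case of the test-function inequality, which by the equality analysis forces $\tilde\Sigma$ to have exactly two distinct constant principal curvatures, i.e.\ to be a Clifford hypersurface $\mathbb{S}^{n_1}(R_1)\times\mathbb{S}^{n_2}(R_2)$, whose antipodally invariant quotient is the stated quotient of a Clifford hypersurface. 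I would close by verifying that no other isoparametric-type family survives the stability test (higher numbers of principal curvatures are ruled out by a dimension/index count, as in the simply connected case). The main obstacle, and the step I would spend the most care on, is the second one: finding an even test function whose Rayleigh quotient in the symmetric stability form is sharp enough to yield rigidity — the naive linear coordinates are unavailable, and one must check that the quadratic replacements do not lose too much in the relevant curvature estimate.
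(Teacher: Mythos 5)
Your lift-to-the-sphere setup is essentially the paper's starting point, and you correctly identify parity as the obstruction: odd functions like $\langle x,a\rangle$ do not descend. But the specific even test functions you propose, and the bookkeeping around them, are where the proof would stall.

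The paper does not use purely quadratic position functions such as $\langle x,a\rangle^2 - \frac{1}{n+2}|x|^2$. Instead it uses the vector-valued field
\[
\Phi_{a,b}= -\langle x,a\rangle x + \langle N,a\rangle N + \langle x,b\rangle N,
\]
whose components are products mixing $\langle x,\cdot\rangle$ and $\langle N,\cdot\rangle$. This mixing is not cosmetic: the Jacobi operator applied to a pure $x\otimes x$ term produces $\langle x,a\rangle\langle N,a\rangle$-type cross terms (from $\Delta\langle x,a\rangle = -n\langle x,a\rangle + nH\langle N,a\rangle$), and to control them you need $N$-components in the test field from the start. Your proposed class of functions would generate these cross terms but provide nothing to cancel them, so averaging over $a$ would not collapse to a usable sign condition on $|A|^2$ and $H$ except in the minimal case $H=0$, where the do Carmo--Ritor\'e--Ros argument already goes through.

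The deeper gap is that for $H>0$ you cannot simply plug a fixed family of test functions into the stability form and average. The admissible variations must have zero mean, and the paper has to work quite hard to arrange this while keeping the quadratic form controllable: it introduces the two-parameter family $\Phi_{a,\beta\varphi(a)}$, shows (via the isomorphism $R(b)=\int_\Sigma\langle x,b\rangle N$) that the functional $F(\varphi,a,\beta)=Q_{\beta\varphi}(a,a)$ satisfies Palais--Smale, and runs a mountain-pass argument to find a critical point $(\varphi_0,a_0,\beta_0)$ at which the test field is automatically balanced and $I(\Phi_{a_0,b_0},\Phi_{a_0,b_0})=0$. Nothing in your proposal plays the role of this balancing/min-max step, and without it the stability inequality never becomes an equality that you can differentiate. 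Finally, your closing remark that higher numbers of principal curvatures are ruled out by "a dimension/index count, as in the simply connected case" does not match the actual mechanism: the equality case $L\Phi_{a_0,b_0}=c$ together with the vanishing of the tangential part forces the algebraic identity $A^2-\beta_0 A - \mathrm{Id}=0$ pointwise, which directly limits $\Sigma$ to at most two constant principal curvatures; no separate index or dimension count is involved.
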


\begin{corollary}\label{corollary main result}
The isoperimetric regions in the real projective space $\mathbb{RP}^{n+1}$ are tubular neighborhoods of projective subspaces $\mathbb{RP}^k\subset \mathbb{RP}^{n+1}$.
\end{corollary}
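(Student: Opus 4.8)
The plan is to derive the corollary from Theorem~\ref{main theorem} by eliminating the two non-spherical candidates as isoperimetric boundaries and then identifying the geodesic spheres with tubes about the point $\mathbb{RP}^0$. First I would invoke the existence and regularity theory cited in the introduction (Almgren, Schoen--Simon, Morgan): an isoperimetric region $\Omega\subset\mathbb{RP}^{n+1}$ exists for every volume, its boundary $\Sigma=\partial\Omega$ is a compact embedded hypersurface smooth away from a singular set of Hausdorff codimension $\ge 8$, and the regular part is a volume preserving stable constant mean curvature hypersurface. In the range $n+1\le 7$ there is no singular set, and one should remark that the classification in Theorem~\ref{main theorem} is stated for smooth $\Sigma$; I would either restrict attention to low dimensions or note that the same dimensional bound controls the singular set here, so that $\Sigma$ is one of the three model hypersurfaces. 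Since $\Sigma$ bounds the region $\Omega$, it must be \emph{two-sided} and embedded, which is the hypothesis of the theorem.

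Next I would rule out the two-fold covering of $\mathbb{RP}^n$. This hypersurface is the lift to $\mathbb{RP}^{n+1}$ of a totally geodesic $\mathbb{RP}^n$; more precisely its preimage in $\mathbb{S}^{n+1}$ is an equatorial $\mathbb{S}^n$, which does not separate $\mathbb{RP}^{n+1}$ into two pieces — its complement is a single open ball (a tube about a point $\mathbb{RP}^0$, in fact the whole projective space minus the equator is connected). Hence it cannot be the boundary of a region, so this case does not occur for isoperimetric hypersurfaces. Alternatively, one computes its mean curvature is zero and checks it is not a minimizer by an explicit competitor; but the separation argument is cleaner. This leaves geodesic spheres and quotients of Clifford hypersurfaces $\mathbb{S}^{n_1}(R_1)\times\mathbb{S}^{n_2}(R_2)$.

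For the Clifford quotients, I would argue that when such a hypersurface is embedded and separating in $\mathbb{RP}^{n+1}$, the region it bounds is precisely a tubular neighborhood of a projective subspace: the Clifford torus $\mathbb{S}^{n_1}(R_1)\times\mathbb{S}^{n_2}(R_2)\subset\mathbb{S}^{n+1}$ is the boundary of a metric tube $\{d(x,\mathbb{S}^{n_1})=r\}$ about a great subsphere, and passing to the antipodal quotient, an embedded (hence antipodal-invariant) such hypersurface descends to the boundary of a tube about $\mathbb{RP}^{n_1}\subset\mathbb{RP}^{n+1}$; embeddedness in $\mathbb{RP}^{n+1}$ forces the product structure to be compatible with the antipodal map, pinning down which radii and factors occur. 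Likewise a geodesic sphere in $\mathbb{RP}^{n+1}$ of small radius is exactly a tube about a point $\mathbb{RP}^0$. Combining the three cases, every isoperimetric hypersurface is the boundary of a tubular neighborhood of some $\mathbb{RP}^k$, which is the assertion of the corollary.

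The main obstacle I anticipate is the bookkeeping in the Clifford case: one must verify not merely that \emph{some} Clifford hypersurface bounds a projective tube, but that \emph{every} embedded two-sided volume preserving stable one appearing in Theorem~\ref{main theorem} is antipodally symmetric in the correct way and hence descends to a genuine tube, and that the candidate list does not contain embedded Clifford quotients that fail to be isoperimetric (these would need to be excluded by a comparison of perimeters, i.e.\ by knowing the isoperimetric profile or by a direct competitor argument). A secondary subtlety is the singular set: if $n+1\ge 8$ one must confirm that the classification theorem, or a minor extension of its proof, still applies to the a priori only partially regular isoperimetric boundary; I would handle this by citing that volume preserving stability plus the Schoen--Simon regularity already forces smoothness in the relevant cases, or by restricting the corollary's proof to reduce to the smooth setting covered by Theorem~\ref{main theorem}.
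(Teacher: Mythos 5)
Your overall architecture — invoke regularity, feed the embedded two-sided boundary into Theorem~\ref{main theorem}, rule out the non-separating double cover of $\mathbb{RP}^n$, and identify spheres and Clifford quotients as boundaries of tubes about $\mathbb{RP}^0$ and $\mathbb{RP}^k$ — matches the paper's intent. But there is a genuine gap in how you handle the singular set in dimensions $n+1\ge 8$, and your two proposed workarounds do not exist as usable facts. There is no theorem saying that ``volume preserving stability plus Schoen--Simon regularity forces smoothness'' for the boundary of an isoperimetric region in high dimensions; the codimension-$8$ bound is sharp as far as the known theory goes, and whether stable CMC minimizers can actually have singularities remains open. Likewise there is no ``reduction to the smooth setting'': Theorem~\ref{main theorem} as stated assumes a compact hypersurface, and one must show its \emph{proof} survives when $\Sigma$ is only partially regular. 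The paper does exactly this in Section~\ref{section proof}: it imports the Morgan--Ritor\'e cut-off lemma (Lemma~\ref{cut-off}) to extend the stability inequality~(\ref{stability inequality 2}) to bounded functions with mean zero and $L^2$ gradient, checks that $|A|\in L^2(\Sigma)$ so that the coordinate functions of $\Phi_{a,b}$ are admissible, verifies the integration by parts~(\ref{integration by parts 2}) still holds, and then tweaks the rigidity step (using the minimum of $r(x)=|A|^2-n$, which exists, instead of the maximum, which may not) to conclude $r$ is constant and hence $\Sigma$ is in fact smooth. That is a real, load-bearing argument, not a citation; omitting it leaves the corollary unproved for $n+1\ge 8$.

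A smaller point: you worry that you must ``exclude embedded Clifford quotients that fail to be isoperimetric,'' but the corollary makes no claim that every tube is isoperimetric — only that every isoperimetric boundary is a tube. Which $\mathbb{RP}^k$-tube wins at each volume is the isoperimetric-\emph{profile} question, which the paper treats separately (and only verifies numerically in low dimensions). Your separation argument for discarding the two-fold cover of $\mathbb{RP}^n$ is correct in substance — that hypersurface either is not embedded or, as a set, has connected complement, so it cannot bound a region — though the phrasing about ``preimage in $\mathbb{S}^{n+1}$'' conflates the sphere and the projective space; the relevant fact is that $\mathbb{RP}^n\subset\mathbb{RP}^{n+1}$ is one-sided with connected complement.
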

  
\noindent  The corresponding result in $\mathbb{S}^{n+1}$ states that among  antipodal invariant regions  of a fixed volume, the least perimeter is
\[
\mathcal{A}= \{x\in \mathbb{S}^{n+1}: \sum_{i=1}^{r}x_i^2> a \}
\]
for some $r\in \{1, \ldots, n+1\}$ and $a\in \mathbb{R}_{+}$. For a discrete version of this result see \cite{EL}.

The following is a brief   account of  results on the isoperimetric problem. Rotationally symmetric surfaces were treated in  \cite{HHM1,R} (see also the survey \cite{HHM}).
The   spaces $\mathbb{H}^2\times\mathbb{R}$, $\mathbb{S}^2\times \mathbb{R}$, $\mathbb{H}^2\times \mathbb{S}^1$, and $\mathbb{S}^2\times \mathbb{S}^1$ were studied in \cite{HH,P,PR} via  symmetrization methods and ODE  stability analysis. 
The case $\mathbb{S}^1\times \mathbb{R}^n$ is also treated in \cite{PR}:  spheres and cylinders are the only  solutions when $n\leq 7$ (see also \cite{RR} when $n=2$).  The problem is also solved outside an explicit compact subset in the moduli space of flat  $T^2 \times \mathbb{R}$, see \cite{HPRR,RR2}. 
The $3$-dimensional spherical space forms with large fundamental group were studied by this author in \cite{V}. 
Finally, we mention the  works \cite{JM,Na} on the  description of small isoperimetric regions in general manifolds  and \cite{CESY,EM} on the uniqueness of large isoperimetric regions in  asymptotic flat manifolds with non-negative scalar curvature and positive mass.

The proof of Theorem \ref{main theorem} is inspired by the   work of do Carmo, Ritor\'{e} and Ros in \cite{dCRR} on the classification of two-sided index one  minimal hypersurfaces in the real projective space $\mathbb{RP}^{n}$. 
The   question of isoperimetry  in $\mathbb{RP}^n$ is  also raised in \cite{dCRR}.  
As in the volume preserving case,   index one minimal hypersurfaces   also minimize area for a certain class of variations. 
Despite   this similarity, the    stability analysis  for constant mean curvature brings several algebraic difficulties. Although there are many topologies for the index one minimal hypersurfaces, the norm of their second fundamental forms  can only  take   two possible values. This striking fact manifested sharply in the proof given in \cite{dCRR}. In contrast, the norm of the second fundamental form of  volume preserving stable hypersurfaces ranges over all  non-negative  numbers. 
A key observation that unite all  hypersurfaces  in Theorem \ref{main theorem} is that their second fundamental form   obey an equation of the form $A^2+\beta A -Id=0$ for some constant  $\beta$. This fact is reflected on the choice of vector fields  used in the second variation argument. Finally, we observed that the constant $\beta$  has a variational interpretation which we exploited to balance the vector fields. To handle possible singular  isoperimetric hypersurfaces in higher dimensions we used a cut-off  approach due to  Morgan and Ritor\'{e} \cite{MR} which  shows  that  singularities are negligible for the second variation argument.
The isoperimetric profile  is discussed in Section \ref{section proof}: we verified in lower dimensions that the  profile  is given by the perimeter of successive tubular neighborhoods of projective subspaces as conjectured by Berger \cite{B}.  

There is a natural comparison  between the isoperimetric profile and the Willmore energy of surfaces  as showed by Ros \cite{Ro2,Ro3}. This connection  was  investigated further by Marques and Neves    in their celebrated   proof of  the Willmore conjecture in \cite{MN}. Following these ideas, we add to the literature an Willmore   type inequality for antipodal invariant hypersurfaces in   $\mathbb{S}^n$ which generalizes the main result in \cite{Ro2}:

\begin{theorem}\label{willmore energy}
Let $\Sigma$ be a compact embedded  hypersurface separating $\mathbb{S}^{n+1}$ in two connected regions which are both antipodal invariant. Then 
\begin{eqnarray}\label{Willmore}
\int_{\Sigma} \bigg(1 + H^2\bigg)^{\frac{n}{2}} d_{\Sigma}\geq \sigma_n
\end{eqnarray}
with equality if, and only if, $\Sigma$ is congruent to the  Clifford hypersurface $C_{\lfloor\frac{n}{2}\rfloor, \lceil \frac{n}{2}\rceil}=\mathbb{S}^{\lfloor\frac{n}{2}\rfloor}\bigg(\sqrt{\frac{\lfloor\frac{n}{2}\rfloor}{n}}\bigg)\times \mathbb{S}^{\lceil\frac{n}{2}\rceil}\bigg(\sqrt{\frac{\lceil\frac{n}{2}\rceil}{n}}\bigg)$.
\end{theorem}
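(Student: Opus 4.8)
The plan is to follow the min-max strategy that Marques and Neves used for the Willmore conjecture \cite{MN}, adapted to the presence of the antipodal symmetry, and to close the argument with the classification of low-index minimal hypersurfaces in $\mathbb{RP}^{n+1}$ from \cite{dCRR}. We may assume $\Sigma$ is a smooth closed hypersurface in $\mathbb{S}^{n+1}$, and write $\mathcal{W}(\Sigma)=\int_\Sigma (1+H^2)^{n/2}\,d_\Sigma$ for the quantity in \eqref{Willmore}, which is invariant under the conformal group of $\mathbb{S}^{n+1}$. The first ingredient is a pointwise estimate: for any hypersurface $\Sigma'$ and any $t$, let $\Sigma'_t$ denote the boundary of the $t$-neighbourhood of one of the regions bounded by $\Sigma'$; the Jacobian of the normal exponential map is $\prod_{i=1}^n(\cos t+\kappa_i\sin t)$ along the points still contributing to $\Sigma'_t$ (those before their first focal time, where every factor is nonnegative), so the arithmetic mean--Cauchy--Schwarz chain gives $\prod_i(\cos t+\kappa_i\sin t)\le(\cos t+H\sin t)^n\le(1+H^2)^{n/2}$, and integrating yields $\mathrm{Vol}(\Sigma'_t)\le \mathcal{W}(\Sigma')$ for every $t$. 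Combined with the conformal invariance of $\mathcal{W}$, this gives $\mathrm{Vol}\big((\Phi(\Sigma))_t\big)\le \mathcal{W}(\Sigma)$ for every conformal $\Phi$ and every $t$.

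I would then assemble these slices into the canonical family of \cite{MN}: using the conformal dilations $F_v$, $v\in\overline{B^{n+2}}$, and the signed-distance parameter $t\in[-\pi,\pi]$, one obtains a continuous map $C$ from $\overline{B^{n+2}}\times[-\pi,\pi]$ into the space $\mathcal{Z}_n(\mathbb{S}^{n+1};\mathbb{Z}_2)$ of mod-$2$ hypersurface cycles with $\sup_{(v,t)}\mathrm{Vol}(C(v,t))\le \mathcal{W}(\Sigma)$, whose restriction to the boundary of the parameter domain consists of round geodesic spheres. Since $\Sigma$ and both complementary regions are antipodally invariant and $A\circ F_v\circ A=F_{-v}$ for the antipodal map $A$, this family can be organized equivariantly — equivalently, as a sweepout of $\mathbb{RP}^{n+1}$ by boundaries — and one verifies that it is homotopically nontrivial as a multiparameter sweepout, i.e.\ it detects a sufficiently high power $\bar\lambda^{\,p}$ of the generator $\bar\lambda\in H^1(\mathcal{Z}_n;\mathbb{Z}_2)$. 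Hence $\mathcal{W}(\Sigma)\ge\omega_p$, the corresponding Almgren--Pitts width.

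The core of the argument is to identify $\omega_p$. Almgren--Pitts min-max produces a closed minimal hypersurface $\Gamma$ in $\mathbb{RP}^{n+1}$ realizing $\omega_p$ (possibly with multiplicity, and with a small singular set controlled by the dimension bound), whose Morse index is bounded by the number of parameters. Here the hypotheses enter decisively: the sweepout is built from boundaries, so $\Gamma$ stays in the trivial mod-$2$ homology class of $\mathbb{RP}^{n+1}$, which excludes the multiplicity-one totally geodesic $\mathbb{RP}^n$; and a Lusternik--Schnirelmann/dimension count shows that the families of great spheres, respectively of totally geodesic $\mathbb{RP}^n$'s, are too small to account for $\bar\lambda^{\,p}$, excluding the remaining low-area degenerate configurations. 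By the classification of low-index minimal hypersurfaces in $\mathbb{RP}^{n+1}$ in \cite{dCRR} (the minimal counterpart of Theorem \ref{main theorem}), $\Gamma$ must then be a quotient of a minimal Clifford hypersurface $\mathbb{S}^{n_1}(R_1)\times\mathbb{S}^{n_2}(R_2)$; an elementary computation shows its volume is minimized by $C_{\lfloor n/2\rfloor,\lceil n/2\rceil}$, so $\omega_p\ge\mathrm{Vol}\big(C_{\lfloor n/2\rfloor,\lceil n/2\rceil}\big)=\sigma_n$, and therefore $\mathcal{W}(\Sigma)\ge\sigma_n$.

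For the equality case I would trace the chain back: if $\mathcal{W}(\Sigma)=\sigma_n$ then equality must hold throughout, so $\mathrm{Vol}(\Sigma)=\mathcal{W}(\Sigma)$ forces $H\equiv 0$, the canonical family attains the width, and the rigidity part of min-max forces an optimal slice to be $C_{\lfloor n/2\rfloor,\lceil n/2\rceil}$; since a minimal hypersurface conformal to $C_{\lfloor n/2\rfloor,\lceil n/2\rceil}$ is congruent to it, $\Sigma$ is congruent to $C_{\lfloor n/2\rfloor,\lceil n/2\rceil}$. I expect the main obstacle to be the identification of the width in the previous paragraph — proving that $\omega_p$ equals $\sigma_n$ rather than the smaller volume of a great sphere or of $\mathbb{RP}^n$ (with or without multiplicity). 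This is exactly the step that uses the hypothesis that both complementary regions are antipodally invariant, and it requires running equivariant Almgren--Pitts min-max together with index estimates in a regime where bumpy-metric genericity is not available; the equality statement is likewise delicate, as it rests on the rigidity results of that min-max theory.
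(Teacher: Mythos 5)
Your first step — the Jacobian estimate $\mathrm{Jac}(F_t)\le(1+H^2)^{n/2}$ via AM--GM and Cauchy--Schwarz, integrated to give $|\Sigma_t|\le\mathcal{W}(\Sigma)$ — is exactly the paper's opening move. After that, however, you and the paper diverge sharply, and the divergence introduces a real gap in your argument.

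You assemble the full Marques--Neves canonical family $C(v,t)$, $v\in\overline{B^{n+2}}$, $t\in[-\pi,\pi]$, and propose to show it detects $\bar\lambda^{\,p}$ and hence $\mathcal{W}(\Sigma)\ge\omega_p$ for some multiparameter width $\omega_p$. This is precisely the step you must make in the \emph{absence} of symmetry, and it is the reason the Willmore conjecture in $\mathbb{S}^3$ was so hard. But it is also the step that is not available in general dimension: identifying $\omega_p$ requires index bounds for the min-max hypersurface together with a classification of minimal hypersurfaces of index up to $p$ in $\mathbb{RP}^{n+1}$, and no such classification exists for $p>1$. You flag this as ``the main obstacle,'' and it is a genuine one — your proposal does not overcome it. (Separately, your claim that $\int_\Sigma(1+H^2)^{n/2}$ is conformally invariant is dubious for $n>2$; this quantity is conformally invariant precisely when $n=2$. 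It turns out to be irrelevant, because the correct argument never needs conformal dilations.)

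The paper's proof avoids the multiparameter machinery altogether, and this is the point of the antipodal hypothesis. Because $\Sigma$, $U$, and $V$ are all antipodally invariant, the \emph{single-parameter} family $\{\Sigma_t\}_{t\in[-\pi,\pi]}$ already projects to an admissible one-parameter sweepout of $\mathbb{RP}^{n+1}$. No conformal reparametrizations $F_v$ are needed to ``fill in'' the family, so the inequality $\mathcal{W}(\Sigma)\ge|\Sigma_{t_0}|\ge W(\mathbb{RP}^{n+1})$ uses only the one-parameter Almgren--Pitts width. That width is then identified via Zhou's theorem (\cite{Z}): it is realized either by a multiplicity-one two-sided index-one minimal hypersurface — which by \cite{dCRR} must be a Clifford quotient — or by a multiplicity-two one-sided minimal hypersurface, whose area is at least $2|\mathbb{RP}^n|=|\mathbb{S}^n|$. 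A calculation (Stone's Gamma-function convexity argument) shows $2|\mathbb{S}^n|>|C_{p,n-p}|\ge|C_{\lfloor n/2\rfloor,\lceil n/2\rceil}|$, so $W(\mathbb{RP}^{n+1})=\sigma_n$. Your proposal would need to reproduce all of this but at arbitrary index $p$, which is not possible with current tools. The lesson is that the symmetry hypothesis buys you the one-parameter sweepout for free, and the one-parameter width is the object that \emph{can} be computed; reaching for the multiparameter theory discards the main advantage the hypothesis gives you.

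Your treatment of the equality case has a related issue: you conclude ``a minimal hypersurface conformal to $C_{\lfloor n/2\rfloor,\lceil n/2\rceil}$ is congruent to it,'' but in the paper the equality analysis is again elementary: equality in the AM--GM/Cauchy--Schwarz chain forces either total umbilicity (excluded, since the Clifford hypersurface is not a sphere) or $H\equiv 0$ with $t_0=0$, so the optimal slice is $\Sigma$ itself, and equality in the width inequality then identifies $\Sigma$ with $C_{\lfloor n/2\rfloor,\lceil n/2\rceil}$. No conformal-rigidity statement for higher-dimensional Clifford hypersurfaces is needed.
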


\begin{remark}
It is  a conjecture of  Solomon  that among  non-totally geodesic minimal hypersurfaces in $\mathbb{S}^{n+1}$, the Clifford hypersurface $C_{\lfloor\frac{n}{2}\rfloor, \lceil \frac{n}{2}\rceil}$ has the least area, see \cite{IW,FM,N}.  Theorem \ref{willmore energy}  confirms this conjecture in the  class of  minimal hypersurfaces with antipodal symmetry. 
In particular, $C_{\lfloor\frac{n}{2}\rfloor, \lceil \frac{n}{2}\rceil}$ has the least area among the  minimal   Clifford hypersurfaces $C_{p,n-p}=\mathbb{S}^p\bigg(\sqrt{\frac{p}{n}}\bigg)\times \mathbb{S}^{n-p}\bigg(\sqrt{\frac{n-p}{n}}\bigg)$ for every $n$. For partial results concerning Solomon's conjecture, see Ilmanen and White  \cite{IW}.
\end{remark}

\noindent Some related results: (i) Theorem \ref{main theorem} was obtained in \cite{ABP} under the additional  assumption that  the length of the second fundamental form   is constant, (ii) 
Ram\'{\i}rez-Luna \cite{RL}   proved     that the width of $\mathbb{RP}^n$ is realized by a   Clifford hypersurface.

\vspace{0.1cm}

\noindent\textit{Acknowledgements.} I would like to thank the support and hospitality of the Institute for Advanced Study  where this work was conducted and Fernando Cod\'{a} Marques for organizing the special year \textit{Variational Methods in Geometry}  and for his interest in this work. 

This material is based upon work supported by the National Science Foundation under grant No. DMS-1638352.

\section{Clifford hypersurfaces}

 The   Clifford hypersurface $T_r^{(n_1,n_2)}$    in $\mathbb{S}^{n+1}$  is defined as the product $\mathbb{S}^{n_1}(\cos r)\times \mathbb{S}^{n_2}(\sin r)$, where $n_1+n_2=n$ and $r$ is constant in $(0,\frac{\pi}{2})$. Each point $x$ in $T_r^{(n_1,n_2)}$ is written as  $x=(\cos(r)z,\sin(r)w)$, where $z\in \mathbb{S}^{n_1}$ and $w\in \mathbb{S}^{n_2}$. In these coordinates, the  unit normal vector field $N$ over   $T_r^{(n_1,n_2)}$ is given by   \[
 N(x)=\bigg(-\sin(r)\,z, \cos(r)\,w\bigg).
 \]
The second fundamental form   of $\mathbb{S}^{n_1}(\cos r)\times \mathbb{S}^{n_2}(\sin r)$  is given below:
\[
A=\begin{bmatrix}
-\frac{\sin(r)}{\cos(r)} I_{n_1}      & 0 \\
0      &  \frac{\cos(r)}{\sin(r)}I_{n_2}
\end{bmatrix}.
\]
Here, $I_{n_1}$ and $I_{n_2}$ denotes the identity maps in $\mathbb{R}^{n_1}$ and $\mathbb{R}^{n_2}$ respectively. The principal curvatures of $T_r^{(n_1,n_2)}$ are $- \frac{\sin(r)}{\cos(r)}$ with multiplicity $n_1$ and $\frac{\cos(r)}{\sin(r)}$ with multiplicity $n_2$. 
The family  $\{T_r^{(n_1,n_2)}\}_{ r \in (0,\frac{\pi}{2})}$ foliates $\mathbb{S}^{n+1}$ by constant mean curvature hypersurfaces starting at the $n_1$-dimensional sphere $T_0^{(n_1,n_2)}=\mathbb{S}^{n_1}\times \{0\}$ and ending at the $n_2$-dimensional sphere $T_{\frac{\pi}{2}}^{(n_1,n_2)}=\{0\}\times \mathbb{S}^{n_2}$.
Each Clifford hypersurface is antipodal symmetric and, hence, descends naturally to the real projective space $\mathbb{RP}^{n+1}$. The Jacobi operator $L= \Delta + n+ |A|^2$  on $T_r^{(n_1,n_2)}$ has the following  expression  \[L= \Delta + n_1+n_2 +n_1 \frac{\sin^2(r)}{\cos^2(r)} + n_2 \frac{\cos^2(r)}{\sin^2(r)}.\] 
We want to study the stability of $T_r^{(n_1,n_2)}$ when projected in $\mathbb{RP}^{n+1}$. 
The eigenvalues of the Laplacian $\Delta$ on $\mathbb{S}^{n_1}(\cos(r))\times \mathbb{S}^{n_2}(\sin(r))$ are 
\[
\frac{k_1(k_1+ n_1-1)}{\cos^2(r)}\quad +\quad \frac{k_2(k_2+n_2-1)}{\sin^2(r)},
\]
where $k_1$ and $k_2$ are non-negative integers.  The eigenfunctions are restrictions to $T_r^{(n_1,n_2)}$ of  products of  homogeneous harmonic polynomials of degree $k_1$ in $\mathbb{R}^{n_1+1}$ with homogeneous harmonic polynomials of degree $k_2$  in $\mathbb{R}^{n_2+1}$. These eigenfunctions  are  invariant by the antipodal map only when $k_1+k_2$ is even.  The  possible values of $k_1$ and $k_2$ to obtain the   smallest positive eigenvalue $\lambda$ are either $(k_1,k_2)=(1,1)$ or $(k_1,k_2)=(2,0)$ or $(0,2)$. 
The stability of $T_r^{(n_1,n_2)}$ is then equivalent to  $\lambda - n - |A|^2\geq 0$. This holds whenever
\[
 \sqrt{\frac{n_2}{n_1+2}}\leq \tan(r)\leq \sqrt{\frac{n_2+2}{n_1}}.
\] 
\begin{remark}\label{characterization clifford torus}
To each Clifford hypersurface $\mathbb{S}^{n_1}(\cos(r))\times \mathbb{S}^{n_2}(\sin(r))$, there  exists a number $\beta \in \mathbb{R}$, which depends on $n_1$, $n_2$, and $r$, such that the the second fundamental form $A$ satisfies:
	\begin{eqnarray}\label{eq Clifford torus}
	A^2 + \beta\,A - Id \equiv 0.
	\end{eqnarray}
Indeed, if $\beta$ solves the equation (\ref{eq Clifford torus}), then $\frac{\sin^2(r)}{\cos^2(r)}- \beta \frac{\sin(r)}{\cos(r)}-1=0$ and $\frac{\cos^2(r)}{\sin^2(r)}+ \beta \frac{\cos(r)}{\sin(r)}-1=0$. Since these equations are equivalent, the claim follows. Taking the trace in both sides of (\ref{eq Clifford torus}) gives $|A|^2-n +\beta\,nH=0$.
\end{remark}

\section{Results}
\subsection{Volume preserving stable}
A   two-sided  isometric immersion $\phi: \Sigma^n \rightarrow M^{n+1}$  has \textit{constant mean curvature} if, and only if, $\phi$ is a critical point of the area functional for  volume preserving variations, see  \cite[Section 2]{BCE}. The mean curvature $H$ is defined by $tr(A)= n\,H$, where $A$ is the second fundamental form of $\Sigma$. Recall that  $A: T_x\Sigma \rightarrow T_x\Sigma$ is defined by $A= -\overline{\nabla}N$, where $N$ is a unit normal vector over $\Sigma$. 
 The critical point $\phi$ is called \textit{volume preserving stable} if the second derivative of the area  is non-negative for such variations. 
 \vspace{0.2cm}
 
\noindent Equivalently, $\phi$  is volume preserving stable if  for every $f \in C^{\infty}(\Sigma)$ with compact support such that $\int_{\Sigma} f\,d_{\Sigma}=0$,   we have
\begin{equation}\label{stability inequality}
I(f,f)=-\int_{\Sigma}fL\,f\,d_{\Sigma} :=\int_{\Sigma} |\nabla f|^2 - (Ric_M(N,N)+ |A|^2)\,f^2\, d_{\Sigma}\geq 0.
\end{equation}
$L$ is the Jacobi operator of $\Sigma$  defined as $L= \Delta_{\Sigma} + Ric_M(N,N) + |A|^2$.

\noindent Let $\phi: \Sigma^n \rightarrow \mathbb{S}^{n+1}$ be a two-sided constant mean curvature immersion. 
\begin{lemma}	
The position vector $x$ and the  unit normal vector $N$ along $\phi: \Sigma\rightarrow \mathbb{S}^{n+1}$ satisfy the following equations:
\begin{eqnarray}\label{fundamental equations}
\Delta_{\Sigma} x +n\,x\,=\,nH\,N  \quad \text{and}\quad \Delta_{\Sigma} N + |A|^2\,N\,=\,nH\,x.
\end{eqnarray}
\end{lemma}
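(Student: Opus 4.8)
The plan is to derive both equations in \eqref{fundamental equations} by differentiating twice, working intrinsically along $\Sigma$ and using the Gauss and Weingarten formulas for the nested inclusion $\Sigma \hookrightarrow \mathbb{S}^{n+1} \hookrightarrow \mathbb{R}^{n+2}$. Write $\overline{\nabla}$ for the Levi-Civita connection of $\mathbb{R}^{n+2}$, $D$ for that of $\mathbb{S}^{n+1}$, and $\nabla$ for that of $\Sigma$; let $\{e_i\}$ be a local orthonormal frame on $\Sigma$, geodesic at the point under consideration, and recall that for $\mathbb{S}^{n+1}\subset\mathbb{R}^{n+2}$ the second fundamental form in the direction of the position vector $x$ is $-\operatorname{Id}$, i.e.\ $\overline{\nabla}_X Y = D_X Y - \langle X, Y\rangle\, x$ for $X,Y$ tangent to the sphere.

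\medskip

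\noindent\textbf{First equation.} Regarding $x$ as an $\mathbb{R}^{n+2}$-valued function on $\Sigma$, I would compute $\Delta_\Sigma x = \sum_i \overline{\nabla}_{e_i}\overline{\nabla}_{e_i} x$. Since $\overline{\nabla}_{e_i} x = e_i$ (the position vector field on $\mathbb{R}^{n+2}$ has covariant derivative equal to the identity), we get $\overline{\nabla}_{e_i}\overline{\nabla}_{e_i} x = \overline{\nabla}_{e_i} e_i$. Now decompose $\overline{\nabla}_{e_i} e_i$ into the part tangent to $\Sigma$, the part tangent to $\mathbb{S}^{n+1}$ but normal to $\Sigma$, and the part normal to $\mathbb{S}^{n+1}$: using the frame being geodesic on $\Sigma$ the tangential part vanishes, the $\Sigma$-normal-but-sphere-tangent part contributes $\langle \overline{\nabla}_{e_i} e_i, N\rangle N = \langle A e_i, e_i\rangle N$ (by $A=-\overline{\nabla}N$ and $\langle N,e_i\rangle=0$), and the sphere-normal part is $-\langle e_i,e_i\rangle x = -x$. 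Summing over $i$ and using $\sum_i \langle Ae_i,e_i\rangle = \operatorname{tr} A = nH$ gives $\Delta_\Sigma x = nH\,N - n\,x$, which is the first identity.

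\medskip

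\noindent\textbf{Second equation.} Similarly I would compute $\Delta_\Sigma N = \sum_i \overline{\nabla}_{e_i}\overline{\nabla}_{e_i} N$. From $\overline{\nabla}_{e_i} N = D_{e_i} N = -A e_i$ (the $\mathbb{S}^{n+1}$-normal component of $\overline{\nabla}_{e_i}N$ vanishes because $\langle N, x\rangle = 0$ along $\mathbb{S}^{n+1}$, so $\overline{\nabla}_{e_i}N$ is already tangent to the sphere), we get $\overline{\nabla}_{e_i}\overline{\nabla}_{e_i} N = -\overline{\nabla}_{e_i}(A e_i)$. Decomposing this vector: the component along $x$ is $\langle A e_i, e_i\rangle x$ (since $-\overline{\nabla}_{e_i}(Ae_i)\cdot x = \langle Ae_i, \overline{\nabla}_{e_i}x\rangle = \langle Ae_i, e_i\rangle$), the component along $N$ is $\langle A e_i, -\overline{\nabla}_{e_i}N\rangle = \langle Ae_i, Ae_i\rangle$ summing to $|A|^2$, and the tangential-to-$\Sigma$ component assembles (using the Codazzi equation in $\mathbb{S}^{n+1}$, which gives $(\nabla_{e_i}A)e_j$ symmetric in $i,j$, together with $\operatorname{tr} A = nH$ constant so $\sum_i (\nabla_{e_i}A)e_i = \nabla(nH) = 0$) into zero. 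Hence $\Delta_\Sigma N = nH\,x - |A|^2 N$, which rearranges to the second identity.

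\medskip

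\noindent The only genuinely delicate point is the vanishing of the $\Sigma$-tangential part in the computation of $\Delta_\Sigma N$: this is exactly where the constant mean curvature hypothesis enters, via $\sum_i(\nabla_{e_i}A)(e_i) = \nabla^\Sigma(\operatorname{tr} A) = n\,\nabla^\Sigma H = 0$ after applying the Codazzi equation on the space form $\mathbb{S}^{n+1}$. Everything else is a bookkeeping exercise in splitting $\overline{\nabla}_{e_i}(\cdot)$ into its $\Sigma$-tangent, $N$-, and $x$-components; I would present it with the geodesic-frame simplification to keep the Christoffel terms invisible.
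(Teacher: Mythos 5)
Your proof is correct and takes essentially the same route as the paper: for the first identity the paper simply invokes $\Delta_\Sigma x = n\overrightarrow{H_E}$ and the decomposition $\overrightarrow{H_E}=-x+\overrightarrow{H}$ (which is what your geodesic-frame computation recovers from scratch), and for the second identity the paper computes $\Delta_\Sigma\langle N,a\rangle$ against a constant vector $a$ and kills the tangential term via $\operatorname{div}_\Sigma A=0$ from Codazzi and constancy of $H$, exactly your mechanism. The only blemish is a sign slip in the phrasing of the $N$-component in your second computation (you write $\langle Ae_i,-\overline{\nabla}_{e_i}N\rangle$ where it should be $\langle Ae_i,\overline{\nabla}_{e_i}N\rangle=-|Ae_i|^2$, the overall minus being absorbed by the leading $-$ in $-\overline{\nabla}_{e_i}(Ae_i)$), but your stated conclusion $\Delta_\Sigma N = nH\,x-|A|^2 N$ is the correct one.
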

\begin{proof}
Recall first that $\Delta_{\Sigma} x= n\overrightarrow{H_E}$ where $\overrightarrow{H_E}$ is the mean curvature vector of $\Sigma$ in $\mathbb{R}^{n+2}$. On the other hand,  $\overrightarrow{H_E}= - x + \overrightarrow{H}$, where $\overrightarrow{H}$ is the  mean curvature vector of $\Sigma$ in $\mathbb{S}^{n+1}$. If  $a$ is a constant vector in $\mathbb{R}^{n+2}$, then $\Delta_{\Sigma}\langle N,a\rangle=e_i\,e_i \langle N,a\rangle - \nabla_{e_i}e_i\,\langle N,a\rangle$, where $\{e_1,\ldots,e_n\}$ is an orthonormal basis of $\Sigma$. A direct computation gives:
\begin{eqnarray*}
	\Delta_{\Sigma}\langle N,a\rangle&=& - \langle \overline{\nabla}_{e_i} A(e_i),a\rangle + \langle x,a\rangle \langle A(e_i),e_i\rangle + \langle A(\nabla_{e_i}e_i),a\rangle \\
	&=& - \langle \nabla_{e_i}A(e_i),a\rangle - \langle A(e_i),A(e_i)\rangle \langle N,a\rangle + \langle A(e_i),e_i\rangle\,\langle x,a\rangle \\
	&& + \langle A(\nabla_{e_i}e_i),a\rangle = - |A|^2\langle N,a\rangle + nH\,\langle x,a\rangle  -\langle div_{\Sigma}A,a\rangle.
\end{eqnarray*}
The Codazzi equation implies that $div_{\Sigma}A=0$ since $H$ is constant. Therefore, $\Delta_{\Sigma} \langle N,a\rangle=- |A|^2\,\langle N,a\rangle +nH\,\langle x,a\rangle$.
\end{proof}

\begin{lemma}\label{lemma 1}
If  $a$ is a  constant vector in $\mathbb{R}^{n+2}$ and $\phi: \Sigma^n \rightarrow \mathbb{S}^{n+1}$ has constant mean curvature $H$, then
\begin{enumerate}
\item $L\langle x,a\rangle x= (|A|^2-n)\langle x,a\rangle x + nH\langle N,a\rangle x + n H\langle x,a\rangle N + X_1$;
\\
\item $L \langle x,a\rangle N= nH\langle N,a\rangle N + nH\langle x,a\rangle x + X_2$;
\\
\item $L \langle N,a\rangle x= nH\langle N,a\rangle N + nH\langle x,a\rangle x + X_3$;
\\
\item $L \langle N,a\rangle N= (-|A|^2+n)\langle N,a\rangle N+nH\langle N,a\rangle x + nH\langle x,a\rangle N + X_4$.
\\
\end{enumerate}
where $X_i:\Sigma \rightarrow \mathbb{R}^{n+2}$ is a  vector field tangent to $\Sigma$ for each $i=1,\ldots,4$.
\end{lemma}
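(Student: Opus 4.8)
The plan is to compute each of the four expressions by brute force, using only the two identities from the previous lemma together with the Leibniz rule for the Laplace--Beltrami operator applied componentwise to $\mathbb{R}^{n+2}$-valued maps. First I collect the ingredients. Because $\mathbb{S}^{n+1}$ has constant Ricci curvature $n$, the Jacobi operator is $L=\Delta_{\Sigma}+n+|A|^{2}$. For a smooth function $f$ and a smooth map $V\colon\Sigma\to\mathbb{R}^{n+2}$ one has
\[
\Delta_{\Sigma}(fV)=(\Delta_{\Sigma}f)\,V+2\sum_{i}(\nabla_{e_{i}}f)\,D_{e_{i}}V+f\,\Delta_{\Sigma}V,
\]
where $\{e_{i}\}$ is a local orthonormal frame of $\Sigma$ and $D$ is the ambient Euclidean derivative. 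I will use $D_{e_{i}}x=e_{i}$ and $D_{e_{i}}N=-A(e_{i})$, the latter because the spherical and Euclidean covariant derivatives of $N$ coincide along $\Sigma$, as $N\perp e_{i}$. Denoting by $a^{\top}$ the orthogonal projection of $a$ onto $T\Sigma$, these give $\nabla\langle x,a\rangle=a^{\top}$ and $\nabla\langle N,a\rangle=-A(a^{\top})$, and hence the four cross terms $\sum_{i}(\nabla_{e_{i}}\langle x,a\rangle)D_{e_{i}}x=a^{\top}$, $\sum_{i}(\nabla_{e_{i}}\langle x,a\rangle)D_{e_{i}}N=-A(a^{\top})$, $\sum_{i}(\nabla_{e_{i}}\langle N,a\rangle)D_{e_{i}}x=-A(a^{\top})$, and $\sum_{i}(\nabla_{e_{i}}\langle N,a\rangle)D_{e_{i}}N=A^{2}(a^{\top})$, each of which is tangent to $\Sigma$; doubled, these are exactly the vector fields $X_{1}=2a^{\top}$, $X_{2}=X_{3}=-2A(a^{\top})$, $X_{4}=2A^{2}(a^{\top})$.

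Next I pair the fundamental equations $\Delta_{\Sigma}x+nx=nHN$ and $\Delta_{\Sigma}N+|A|^{2}N=nHx$ with the constant vector $a$ to read off the scalar Laplacians $\Delta_{\Sigma}\langle x,a\rangle=nH\langle N,a\rangle-n\langle x,a\rangle$ and $\Delta_{\Sigma}\langle N,a\rangle=nH\langle x,a\rangle-|A|^{2}\langle N,a\rangle$.

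With these in hand each identity is a one-line substitution. For (1), taking $f=\langle x,a\rangle$ and $V=x$,
\[
L(fx)=\big(nH\langle N,a\rangle-nf\big)x+2a^{\top}+f\big(nHN-nx\big)+(n+|A|^{2})fx,
\]
and collecting the $fx$-contributions into $(|A|^{2}-n)fx$ leaves $(|A|^{2}-n)\langle x,a\rangle x+nH\langle N,a\rangle x+nH\langle x,a\rangle N+X_{1}$. Cases (2)--(4) run identically: expand $L(fV)$ by the Leibniz rule, substitute $\Delta_{\Sigma}V$ from the fundamental equations and the scalar Laplacian of $f$ from the previous paragraph, and simplify; in each case the $n$ and $|A|^{2}$ terms recombine so that the only non-tangential survivors are the advertised combinations of $\langle x,a\rangle x$, $\langle x,a\rangle N$, $\langle N,a\rangle x$, $\langle N,a\rangle N$, while everything else is the tangential remainder $X_{i}$. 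There is no genuine obstacle here; the only points demanding care are the sign in $D_{e_{i}}N=-A(e_{i})$ and the verification that each cross term is indeed tangent to $\Sigma$, so that it is legitimate to absorb it into $X_{i}$.
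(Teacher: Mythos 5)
Your proof is correct and follows essentially the same route as the paper: expand $L$ on each product via the Leibniz rule for the Laplacian, substitute the two fundamental equations for $\Delta_\Sigma x$ and $\Delta_\Sigma N$, and observe that the cross terms $2\sum_i(\nabla_{e_i}f)D_{e_i}V$ are tangent to $\Sigma$ and give exactly $X_1=2a^\top$, $X_2=X_3=-2A(a^\top)$, $X_4=2A^2(a^\top)$. The only cosmetic difference is that you keep the computation vector-valued while the paper phrases it coordinatewise; the content is identical.
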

\begin{proof}
The Jacobi operator evaluated in a vector valued function $V:\Sigma \rightarrow \mathbb{R}^{n+2}$ is understood to be computed on each coordinate:
\[
L(V)= \sum_{i=1}^{n+2}L(\langle V, e_k\rangle)\,e_k,
\]
where $\{e_1,\ldots,e_{n+2}\}$ is the standard orthonormal basis in $\mathbb{R}^{n+2}$. Hence, for the vector fields in the lemma, each summand $\langle V,e_k\rangle$ is a product of functions   $u$ and $v$. The lemma  follows from combining the formula \[
\Delta (uv)= u\Delta v + v\Delta u + 2\langle \nabla u,\nabla v\rangle
\]
with the identities given in (\ref{fundamental equations}). The terms associated to the inner product of gradients corresponds to the tangent vector fields $X_i$ in the lemma. They are given below:
\[X_1= 2\, a^{\top}, \quad X_2=X_3= -2\,A(a^{\top}),\quad \text{and}\quad X_4=2\,A^2(a^{\top}),
\]
where $a^{\top}$ denotes the projection of $a\in \mathbb{R}^{n+2}$ onto $T_x\Sigma$.
\end{proof}
\begin{lemma}\label{jacobi function}
If the projection of $\Sigma$ is volume preserving stable in $\mathbb{RP}^{n+1}$, then  $f:\Sigma \rightarrow \mathbb{R}$ given by $f(x)=\langle x,a\rangle \langle N,b\rangle - \langle x,b\rangle \langle N,a\rangle$  satisfies $L(f)=0$ and 
\[
\int_{\Sigma}\bigg( \langle x,a\rangle \langle N,b\rangle - \langle x,b\rangle \langle N,a\rangle\bigg)d_{\Sigma}=0.
\]
\end{lemma}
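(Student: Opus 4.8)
The plan is to verify the two assertions in turn, the first by a direct computation using Lemma \ref{lemma 1}, and the second as a consequence of the first together with the volume preserving stability condition.

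For the identity $L(f) = 0$: the function $f = \langle x,a\rangle\langle N,b\rangle - \langle x,b\rangle\langle N,a\rangle$ is an antisymmetrized combination of products of the type analyzed in Lemma \ref{lemma 1}. First I would apply parts (2) and (3) of that lemma to get
\[
L\big(\langle x,a\rangle\langle N,b\rangle\big) = nH\langle N,a\rangle\langle N,b\rangle + nH\langle x,a\rangle\langle x,b\rangle + \langle X_2(a),b\rangle_{\text{(schematically)}},
\]
where the tangential correction is $-2\langle A(a^{\top}), b\rangle$ in the relevant scalar pairing; similarly for the three other terms obtained by the substitutions $a\leftrightarrow b$ and by swapping the roles of $x$ and $N$. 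The key point is that when one forms the antisymmetric combination $\langle x,a\rangle\langle N,b\rangle - \langle x,b\rangle\langle N,a\rangle$, every surviving term is symmetric under $a\leftrightarrow b$: the terms $nH\langle N,a\rangle\langle N,b\rangle$ and $nH\langle x,a\rangle\langle x,b\rangle$ are manifestly symmetric, and the tangential corrections pair off because $\langle A(a^{\top}),b^{\top}\rangle = \langle A(b^{\top}),a^{\top}\rangle$ by self-adjointness of $A$. Hence the antisymmetrization annihilates everything and $L(f)=0$. (Equivalently, one can observe that $f$ is, up to the embedding, the Jacobi field generated by the Killing field of $\mathbb{S}^{n+1}$ corresponding to the rotation in the $a$–$b$ plane, which preserves mean curvature and hence lies in the kernel of $L$.)

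For the integral vanishing: since the projection of $\Sigma$ is volume preserving stable in $\mathbb{RP}^{n+1}$, the quadratic form $I$ in \eqref{stability inequality} is nonnegative on mean-zero test functions, and in particular $L$ has no negative eigenvalue on the space of functions with zero average. If $\int_\Sigma f \, d_\Sigma \neq 0$, decompose $f = c + f_0$ with $c = \frac{1}{|\Sigma|}\int_\Sigma f$ and $\int_\Sigma f_0 = 0$; from $Lf = 0$ we get $Lf_0 = -Lc = c(Ric_M(N,N)+|A|^2)$ on $\mathbb{S}^{n+1}$ this is $c(n+|A|^2)$, a strictly positive function times $c$, and one checks $I(f_0,f_0) = -\int f_0 L f_0 = -c\int f_0 (n+|A|^2) \, d_\Sigma$; but testing stability against $f_0$ and using that $f_0$ is orthogonal to constants while $n+|A|^2 > 0$ forces the sign to be wrong unless $c=0$. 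The cleanest route, which I would actually take: observe that $f$ integrates to zero because $f$ is a derivative along a flow preserving the measure — concretely, $f = \frac{d}{dt}\big|_{t=0}\langle R_t x, b\rangle$ restricted appropriately, and $\int_\Sigma \langle x, v\rangle \, d_\Sigma$ is, up to a dimensional constant, the signed enclosed-volume functional evaluated on a fixed vector $v$; differentiating along the one-parameter group of rotations in the $a$–$b$ plane, which acts by isometries of $\mathbb{S}^{n+1}$ and hence preserves $d_\Sigma$ and the enclosed volume of any fixed competitor, yields $\int_\Sigma f \, d_\Sigma = 0$. The main obstacle is bookkeeping: making the rotation-Killing-field interpretation rigorous for a possibly singular immersion, and keeping track of which tangential terms $X_i$ appear with which arguments when expanding $L(f)$ term by term; but the self-adjointness of $A$ makes the cancellation automatic once the expansion is written out.
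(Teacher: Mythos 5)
Your computation of $L(f)=0$ via Lemma \ref{lemma 1} together with self-adjointness of $A$ is correct, and your stability argument for the integral is in fact the paper's argument in different clothing (the paper tests $I(f+t,f+t)=t^2 I(1,1)<0$ for constants $t\neq0$, then chooses $t$ so that $f+t$ has zero mean, which is the same as your decomposition $f=c+f_0$). However, your sign is wrong in a way that destroys the conclusion. Since $Lf=0$ and $L(c)=c\,(n+|A|^2)$, one has $Lf_0=-c\,(n+|A|^2)$, so
\[
I(f_0,f_0)=-\int_\Sigma f_0\,Lf_0\,d_\Sigma=\;+\,c\int_\Sigma f_0\,(n+|A|^2)\,d_\Sigma,
\]
not $-c\int f_0(n+|A|^2)$. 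You must then observe, which you do not state, that $Lf=0$ forces $\int_\Sigma(n+|A|^2)f\,d_\Sigma=-\int_\Sigma\Delta f\,d_\Sigma=0$ by Stokes; hence $\int f_0(n+|A|^2)=-c\int(n+|A|^2)$ and $I(f_0,f_0)=-c^2\int_\Sigma(n+|A|^2)\,d_\Sigma$, which is strictly negative when $c\neq0$ and contradicts stability. With the sign as you wrote it one instead obtains $I(f_0,f_0)=+c^2\int(n+|A|^2)\geq 0$, which contradicts nothing, so the argument as written does not close.

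The Killing-field "cleanest route" you favor is a good guiding picture but is not a free replacement in the lemma's generality. The hypothesis is a volume-preserving stable \emph{immersion} $\widetilde\varphi:\widetilde\Sigma\to\mathbb{RP}^{n+1}$ (and this is exactly how the lemma is applied in the proof of Theorem \ref{main theorem}); $\Sigma$ need not bound a region, so "preservation of enclosed volume under the rotation flow" requires a chain-level or current-level interpretation that you do not supply. Moreover, the identification of $\int_\Sigma\langle x,v\rangle\,d_\Sigma$ with an enclosed-volume functional is the Euclidean formula and does not transfer to $\mathbb{S}^{n+1}$ as stated. The balancing identity $\int_\Sigma\langle V,N\rangle\,d_\Sigma=0$ for an ambient Killing field $V$ is precisely what this lemma is designed to extract from stability; it should be derived, not assumed.
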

\begin{proof}
	It follows from Lemma \ref{lemma 1} that $L(f)=0$. One can easily check that $I(f + t,f+t)<0$ for every constant $t\neq 0$. Hence, $\int_{\Sigma}f d_{\Sigma}=0$.
\end{proof}

		\noindent Given two vectors  $a,b \in \mathbb{R}^{n+2}$, we consider the vector valued function $\Phi_{a,b}: \Sigma \rightarrow \mathbb{R}^{n+2}$ defined by:
\begin{equation}\label{definition vector field}
\Phi_{a,b}=\,-\,\langle x,a\rangle x\, +\, \langle N,a\rangle N\,  +\, \langle x,b\rangle N. 
\end{equation}
\begin{lemma}\label{tangent vector field}
	The  Jacobi operator evaluated on $\Phi_{a,b}$ is given by:
	\begin{eqnarray*}
		L\Phi_{a,b}=-(|A|^2-n)\bigg(\langle N,a\rangle N +\langle x,a\rangle x\bigg) + nH\bigg(\langle x,b\rangle x + \langle N,b\rangle N\bigg) + X,
	\end{eqnarray*}
	where $X$ is the tangent vector field over $\Sigma$ given by:
	\begin{equation*}\label{tangent vector field}
	X=  2\bigg(A^2(a^{\top})- A(b^{\top})\,-\,a^{\top}\bigg).
	\end{equation*}
\end{lemma}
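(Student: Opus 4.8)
The plan is to exploit the linearity of the Jacobi operator $L$, which by its very definition acts coordinate-wise on $\mathbb{R}^{n+2}$-valued functions, and to reduce everything to the three computations already packaged in Lemma \ref{lemma 1}. Since $\Phi_{a,b} = -\langle x,a\rangle x + \langle N,a\rangle N + \langle x,b\rangle N$, I would evaluate $L$ separately on each summand: part (1) of Lemma \ref{lemma 1} handles $\langle x,a\rangle x$, part (4) handles $\langle N,a\rangle N$, and part (2), applied with the constant vector $b$ in place of $a$, handles $\langle x,b\rangle N$.

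Next I would add the three resulting expressions. From $-L(\langle x,a\rangle x)$ one gets $-(|A|^2-n)\langle x,a\rangle x - nH\langle N,a\rangle x - nH\langle x,a\rangle N - X_1$, from $L(\langle N,a\rangle N)$ one gets $-(|A|^2-n)\langle N,a\rangle N + nH\langle N,a\rangle x + nH\langle x,a\rangle N + X_4$, and from $L(\langle x,b\rangle N)$ one gets $nH\langle N,b\rangle N + nH\langle x,b\rangle x + X_2$. The point to notice is that the two ``mixed'' terms $nH\langle N,a\rangle x$ and $nH\langle x,a\rangle N$ occur with opposite signs in the first and second lines and therefore cancel identically; what remains from these two lines is exactly $-(|A|^2-n)\bigl(\langle x,a\rangle x + \langle N,a\rangle N\bigr)$, while the third line contributes $nH\bigl(\langle x,b\rangle x + \langle N,b\rangle N\bigr)$.

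Finally, collecting the tangential remainders and using the explicit values $X_1 = 2a^{\top}$, $X_2 = -2A(b^{\top})$, $X_4 = 2A^2(a^{\top})$ recorded in Lemma \ref{lemma 1}, I would obtain the tangent field $X = X_4 + X_2 - X_1 = 2\bigl(A^2(a^{\top}) - A(b^{\top}) - a^{\top}\bigr)$, which is precisely the asserted formula. I do not expect a genuine obstacle: the whole argument is a short bookkeeping exercise in assembling a linear combination of known identities. The only step that genuinely matters is the sign cancellation of the mixed $x$–$N$ terms, and this is exactly the reason $\Phi_{a,b}$ is defined with the particular signs $-\langle x,a\rangle x + \langle N,a\rangle N$; I would make sure to flag that cancellation explicitly rather than bury it in the computation.
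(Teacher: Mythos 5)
Your computation is correct and follows the route the paper clearly intends: the paper states Lemma \ref{tangent vector field} without proof immediately after establishing Lemma \ref{lemma 1}, and the only possible argument is precisely the one you give — apply linearity of $L$ to the three summands of $\Phi_{a,b}$, invoke parts (1), (4), and (2) (the last with $b$ in place of $a$), observe the cancellation of the mixed $nH\langle N,a\rangle x$ and $nH\langle x,a\rangle N$ terms, and collect the tangential pieces $-X_1 + X_4 + X_2 = 2(A^2(a^{\top}) - A(b^{\top}) - a^{\top})$. Nothing to add or correct.
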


\noindent Next lemma generalizes an assertion for minimal hypersurfaces in \cite{dCRR}.

\begin{lemma}\label{isomorphism}Let $R$ be the linear map $R: \mathbb{R}^{n+2}\rightarrow \mathbb{R}^{n+2}$  defined by	
\[
R(b)= \int_{\Sigma} \langle x,b\rangle\,N\,d_{\Sigma}.	\]
If  the projection of $\Sigma$ is volume preserving stable in $\mathbb{RP}^{n+1}$ and it is not totally geodesic, then $R$ is an isomorphism.\end{lemma}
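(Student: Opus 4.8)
The plan is to show $R$ is injective, which suffices since $R$ is an endomorphism of the finite-dimensional space $\mathbb{R}^{n+2}$. So suppose $R(b)=0$ with $b\neq0$; I want a contradiction. First, $\int_{\Sigma}\langle x,b\rangle\langle N,a\rangle\,d_{\Sigma}=\langle R(b),a\rangle=0$ for every $a$, and since by Lemma \ref{jacobi function} the integrand $\langle x,a\rangle\langle N,b\rangle-\langle x,b\rangle\langle N,a\rangle$ has zero mean, also $\int_{\Sigma}\langle x,a\rangle\langle N,b\rangle\,d_{\Sigma}=0$ for every $a$. Hence each $v_{a}:=\langle x,a\rangle\langle N,b\rangle$ is a zero-mean, antipodally invariant function on the closed manifold $\Sigma$, so it is admissible in (\ref{stability inequality}).

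The idea is to feed the $v_a$ into stability and sum over a basis. By Lemma \ref{lemma 1}(2), $Lv_{a}=nH\langle N,a\rangle\langle N,b\rangle+nH\langle x,a\rangle\langle x,b\rangle-2\langle A(a^{\top}),b^{\top}\rangle$. Letting $a$ run over an orthonormal basis $\{e_{k}\}$ of $\mathbb{R}^{n+2}$ and using $\sum_k\langle x,e_k\rangle\langle N,e_k\rangle=\langle x,N\rangle=0$, $\sum_k\langle x,e_k\rangle^{2}=1$, and $\sum_k\langle x,e_k\rangle A(e_k^{\top})=A(x^{\top})=0$ (since the position vector satisfies $x^{\top}=0$), the cross terms collapse and $\sum_k v_{e_k}Lv_{e_k}=nH\langle x,b\rangle\langle N,b\rangle$, whence
\[
0\le\sum_{k}I(v_{e_k},v_{e_k})=-nH\!\int_{\Sigma}\langle x,b\rangle\langle N,b\rangle\,d_{\Sigma}=-nH\,\langle R(b),b\rangle=0.
\]
Thus every $I(v_{e_k},v_{e_k})=0$, and by Cauchy--Schwarz for the positive semidefinite form $I$ on zero-mean functions, $I(v_a,v_a)=0$ for all $a$; so each $v_a$ is a null direction of the constrained index form, forcing $Lv_a$ to be constant. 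Since $v_a$ is linear in $a$ there is a fixed $c\in\mathbb{R}^{n+2}$ with $Lv_a=\langle a,c\rangle$, i.e.
\[
nH\langle N,b\rangle\,N+nH\langle x,b\rangle\,x-2A(b^{\top})\equiv c\qquad\text{on }\Sigma.
\]
Pairing with $x$ gives $\langle x,\,nHb-c\rangle\equiv0$, so either $\Sigma$ lies in a hyperplane, hence in a totally geodesic $\mathbb{S}^{n}$, contradicting the hypothesis, or $c=nHb$, in which case the tangential part of the identity yields $A(b^{\top})=-\tfrac{nH}{2}\,b^{\top}$ everywhere.

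It remains to rule out this residual case. When $H\neq0$ this is clean: one has $R^{\ast}(b)=\int_{\Sigma}\langle N,b\rangle x\,d_{\Sigma}=0$ because $\langle R^{\ast}(b),a\rangle=\langle R(a),b\rangle=\langle R(b),a\rangle=0$ by Lemma \ref{jacobi function}; then, using $\nabla_{\Sigma}\langle N,b\rangle=-A(b^{\top})$ together with the divergence identity $\int_{\Sigma}\nabla_{\Sigma}\langle N,b\rangle\,d_{\Sigma}=n\!\int_{\Sigma}\langle N,b\rangle x\,d_{\Sigma}-nH\!\int_{\Sigma}\langle N,b\rangle N\,d_{\Sigma}$, integrating the pointwise identity over $\Sigma$ gives $\int_{\Sigma}\langle x,b\rangle x\,d_{\Sigma}-\int_{\Sigma}\langle N,b\rangle N\,d_{\Sigma}=|\Sigma|\,b$; pairing this with $b$ and subtracting from $\int_{\Sigma}(\langle x,b\rangle^{2}+\langle N,b\rangle^{2}+|b^{\top}|^{2})\,d_{\Sigma}=|\Sigma|\,|b|^{2}$ forces $2\!\int_{\Sigma}\langle N,b\rangle^{2}+\int_{\Sigma}|b^{\top}|^{2}=0$, so $b^{\top}\equiv0$ and $\langle N,b\rangle\equiv0$, whence $\langle x,b\rangle\equiv\pm|b|$ and $x$ is constant, which is absurd.

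The hard part is the minimal case $H=0$, where the integrated identity above is vacuous. There $A(b^{\top})\equiv0$, so $\langle N,b\rangle$ is a constant; if that constant is nonzero then $v_a=\langle N,b\rangle\langle x,a\rangle$ and, using $L\langle x,a\rangle=|A|^{2}\langle x,a\rangle$ for minimal $\Sigma$, $0=\sum_k I(v_{e_k},v_{e_k})=-|\langle N,b\rangle|^{2}\int_{\Sigma}|A|^{2}\,d_{\Sigma}$ forces $A\equiv0$, contradicting the hypothesis. The remaining subcase $\langle N,b\rangle\equiv0$ (equivalently $\det A\equiv0$) is precisely the minimal situation treated in \cite{dCRR}, which I would invoke, or reprove via the integrability and totally geodesic leaves of the relative nullity distribution of $A$, to conclude. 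This last subcase is the genuine obstacle.
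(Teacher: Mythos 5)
Your argument parallels the paper's up to and including the derivation $nH\langle N,b\rangle N + nH\langle x,b\rangle x - 2A(b^{\top})=c$ and the identification $c=nHb$, $A(b^{\top})=-\tfrac{nH}{2}b^{\top}$; the main cosmetic difference is that you test with $v_a=\langle x,a\rangle\langle N,b\rangle$ and sum over $a$, whereas the paper tests with the single vector-valued function $V=\langle x,b\rangle N$ and sums over its components. In the case $H\neq 0$ your conclusion is correct but you miss the shortest route: the paper observes that $-2A(b^{\top})=nHb^{\top}$ is exactly $\nabla\bigl(2\langle N,b\rangle-nH\langle x,b\rangle\bigr)=0$, so $2\langle N,b\rangle-nH\langle x,b\rangle$ is a constant; but $x$ and $N$ are both odd under the deck involution $s$, so this constant odd function vanishes and $2\langle N,b\rangle=nH\langle x,b\rangle$ pointwise. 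Plugging into $0=\langle R(b),b\rangle$ gives $\int_\Sigma\langle x,b\rangle^2=0$ and finishes immediately. This same oddness observation also tells you, for free, that in the minimal case $\langle N,b\rangle\equiv 0$ — so your "if that constant is nonzero" branch is vacuous.

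The genuine gap in your write-up is precisely the last subcase you flag: $H=0$ and $\langle N,b\rangle\equiv 0$. You propose to either cite \cite{dCRR} (which is stated for \emph{index one} minimal hypersurfaces, a stability condition not literally the same as the volume-preserving one used here) or to run a relative-nullity argument you do not carry out; neither closes the proof as written. The paper instead finishes with Obata's theorem: since $\langle N,b\rangle\equiv 0$, the Hessian formula for the restricted linear function gives $\nabla^2\langle x,b\rangle=-\langle x,b\rangle\,g$ on $\Sigma$, and $\langle x,b\rangle$ is non-constant (otherwise $b^{\top}\equiv 0$ and, with $\langle N,b\rangle\equiv 0$, one gets $x$ constant); by \cite{O} $\Sigma$ is isometric to the unit round sphere, and then the Gauss equation combined with minimality forces $A\equiv 0$, contradicting the non–totally geodesic hypothesis. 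If you insert the oddness step to get $\langle N,b\rangle\equiv 0$ immediately when $H=0$ and then invoke Obata in this way, your proof is complete.
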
\begin{proof}If $R$ is not an isomorphism, then there exists a non-zero vector $b \in \mathbb{R}^{n+2}$ such that $R(b)=0$. This suggest using the vector field $V= \langle x,b\rangle N$ in the stability inequality. 
	It follows from Lemma \ref{lemma 1} that\[I(V,V)=-\int_{\Sigma}\langle V,LV\rangle\,d_{\Sigma}= -nH \int_{\Sigma} \langle x,b\rangle \langle N,b\rangle\,d_{\Sigma}=0.\]Since $\Sigma$ is stable,  $LV=c$, for some vector $c\in \mathbb{R}^{n+2}$.  It follows from  Lemma \ref{lemma 1} that \[nH(\langle x,b\rangle x + \langle N,b\rangle N ) - 2A(b^{\top})= c.\] 
In particular, $\langle x, c- nHb\rangle=0$ which implies $c=nHb$ since $\Sigma$ is not totally geodesic. Moreover, since $\Sigma$ is invariant by the antipodal map  and $-2A(b^{\top})= c^{\top}$, we obtain that  $2\langle N,b\rangle= nH \langle x,b\rangle$. If $H \neq 0$, then
	\[
	0=\int_{\Sigma} \langle x,b\rangle\langle N,b\rangle\,d_{\Sigma}= \frac{nH}{2} \int_{\Sigma}\langle x,b\rangle^2\,d_{\Sigma}.
	\]
	This is a contradiction since $\Sigma$ is not totally geodesic. On the other hand, if $H=0$, then $\langle N,b\rangle=0$. Hence, $\nabla^2(\langle x,b\rangle)=- \langle x,b\rangle Id$. By Obata's Theorem in \cite{O}, $\Sigma$ is congruent to a round sphere. The Gauss equation then implies that $\Sigma$ is totally geodesic, contradiction.
 \end{proof}

\section{Proof of Theorem \ref{main theorem}}\label{section proof}


\begin{proof}
	Let $\widetilde{\varphi}: \widetilde{\Sigma}\rightarrow \mathbb{RP}^{n+1}$ denote the stable  immersion of $\widetilde{\Sigma}$ in $\mathbb{RP}^{n+1}$. Using locally constant functions we conclude from the stability assumption that $\widetilde{\Sigma}$ is connected. If there exists a lift $\varphi: \widetilde{\Sigma}\rightarrow \mathbb{S}^{n+1}$, then it follows from \cite{BCE} that $\widetilde{\Sigma}$ is totally umbilical and hence, a geodesic sphere.
		If such lift does not exist, then there exist a orientable double covering $\Sigma\rightarrow \widetilde{\Sigma}$ and an isometric immersion $\varphi: \Sigma \rightarrow \mathbb{S}^{n+1}$ such that $\varphi\circ s=-\varphi$, where $s$ is the involution associated to the covering $\Sigma \rightarrow \widetilde{\Sigma}$. The two-sided assumption implies that $\Sigma$ is orientable and $N\circ s= -N$. The volume preserving stability of $\widetilde{\Sigma}$ implies that $I(u,u)\geq 0$ for every function $u:\Sigma\rightarrow \mathbb{R}$ which is even, i.e., $u\circ s=u$, and satisfies $\int_{\Sigma} u\,d_{\Sigma}=0$.  Note that the vector field $\Phi_{a,b}$ defined in (\ref{definition vector field}) satisfies $\Phi_{a,b}\circ s=\Phi_{a,b}$.

		\begin{lemma}\label{expression quadratic}
		\begin{eqnarray*}
			I(\Phi_{a,b},\Phi_{a,b})= \int_{\Sigma} (|A|^2-n)\bigg( -\langle x, a\rangle^2 + \langle N,a\rangle^2 \bigg)d_{\Sigma}\quad\quad\quad \quad\quad\quad \quad\quad \\
			-2nH \int_{\Sigma}\bigg(\langle N,a\rangle  \langle N,b\rangle - \langle x, a\rangle\langle x,b\rangle + \frac{1}{2}\langle N,b\rangle \langle x,b\rangle \bigg) \,d_{\Sigma}  .\end{eqnarray*}
		\end{lemma}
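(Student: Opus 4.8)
The plan is to evaluate $I(\Phi_{a,b},\Phi_{a,b})$ directly from the identity $I(V,V)=-\int_\Sigma\langle V,LV\rangle\,d_\Sigma$ (obtained by applying the second variation \eqref{stability inequality} componentwise, exactly as in the proof of Lemma \ref{isomorphism}), feeding in the expression for $L\Phi_{a,b}$ supplied by Lemma \ref{tangent vector field}.

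The first step is bookkeeping in $\mathbb{R}^{n+2}$. I would record that $|x|^2=|N|^2=1$ and $\langle x,N\rangle=0$ (the latter since $N$ is tangent to $\mathbb{S}^{n+1}$), and that the tangential field $X$ in Lemma \ref{tangent vector field} is orthogonal to both $x$ and $N$, hence to $\Phi_{a,b}$, which is a linear combination of $x$ and $N$ alone. Thus $X$ drops out of $\langle\Phi_{a,b},L\Phi_{a,b}\rangle$, and expanding the remaining quadratic expression in the orthonormal pair $\{x,N\}$ and collecting terms gives
\[
\langle\Phi_{a,b},L\Phi_{a,b}\rangle=(|A|^2-n)\big(\langle x,a\rangle^2-\langle N,a\rangle^2-\langle N,a\rangle\langle x,b\rangle\big)+nH\big(\langle N,a\rangle\langle N,b\rangle-\langle x,a\rangle\langle x,b\rangle+\langle x,b\rangle\langle N,b\rangle\big).
\]
Negating and integrating yields a formula that agrees with the claimed one \emph{except} for an extra cross term $\int_\Sigma(|A|^2-n)\langle N,a\rangle\langle x,b\rangle\,d_\Sigma$ together with a reshuffling of the $nH$-coefficients.

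The step that requires an actual idea rather than routine algebra is the removal of this cross term. For this I would use the two scalar equations obtained by pairing the fundamental equations \eqref{fundamental equations} with the constant vectors $a$ and $b$ (legitimate since $\Delta_\Sigma$ commutes with $\langle\,\cdot\,,a\rangle$):
\[
\Delta_\Sigma\langle N,a\rangle=-|A|^2\langle N,a\rangle+nH\langle x,a\rangle,\qquad \Delta_\Sigma\langle x,b\rangle=-n\langle x,b\rangle+nH\langle N,b\rangle.
\]
Inserting these into Green's identity $\int_\Sigma\big(\langle x,b\rangle\,\Delta_\Sigma\langle N,a\rangle-\langle N,a\rangle\,\Delta_\Sigma\langle x,b\rangle\big)\,d_\Sigma=0$ on the closed manifold $\Sigma$ gives exactly
\[
\int_\Sigma(|A|^2-n)\langle N,a\rangle\langle x,b\rangle\,d_\Sigma=nH\int_\Sigma\big(\langle x,a\rangle\langle x,b\rangle-\langle N,a\rangle\langle N,b\rangle\big)\,d_\Sigma.
\]
Substituting this into the integrated formula from the previous step cancels the cross term and rearranges the $nH$-terms into precisely the stated expression.

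I expect the only delicate point to be recognizing that the naive expansion does not land on the stated formula and that the discrepancy is accounted for exactly by the integration-by-parts identity above; everything else is careful but mechanical. (Should a singular set be present, Green's identity would be replaced by the Morgan--Ritor\'{e} cut-off argument, but for the immersed $\Sigma$ at hand this is not needed.)
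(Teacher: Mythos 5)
Your proposal is correct and follows essentially the same route as the paper: expand $-\int_\Sigma\langle\Phi_{a,b},L\Phi_{a,b}\rangle\,d_\Sigma$ using Lemma \ref{tangent vector field} (with the tangential piece $X$ dropping out since $\Phi_{a,b}$ lies in $\mathrm{span}\{x,N\}$), and then remove the cross term $\int_\Sigma(|A|^2-n)\langle N,a\rangle\langle x,b\rangle\,d_\Sigma$ via the fundamental equations \eqref{fundamental equations} together with Green's identity. The integration-by-parts step you isolate is precisely the paper's formula \eqref{integration by parts 2}, and your substitution reproduces the stated expression after collecting the $nH$-terms.
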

		\begin{proof}
		Recall from Lemma \ref{tangent vector field}  that  \[L\Phi_{a,b}= -(|A|^2-n)(\langle x,a\rangle x + \langle N,a\rangle N) +nH(\langle x,b\rangle x + \langle N,b\rangle N) + X,\] where $X$ is a tangent vector over $\Sigma$. A straightforward computation gives 
		\begin{eqnarray*}	-\int_{\Sigma} \langle \Phi_{a,b},L\Phi_{a,b}\rangle d_{\Sigma}= \int_{\Sigma} (|A|^2-n)\bigg(-\langle x,a\rangle^2 + \langle N,a\rangle^2 +\langle N,a\rangle \langle x,b\rangle\bigg)\\	- \int_{\Sigma} nH\bigg(-\langle x,b\rangle \langle x,a\rangle + \langle N,b\rangle \langle N,a\rangle +  \langle N,b\rangle \langle x,b\rangle	\bigg).\end{eqnarray*}   We have from the identities (\ref{fundamental equations}) that\begin{eqnarray*}	\int_{\Sigma}(|A|^2-n)\langle N,a\rangle\langle x,b\rangle d_{\Sigma}= \int_{\Sigma} \langle N,b\rangle\Delta \langle x,a\rangle- \langle x,b\rangle \Delta \langle N,a\rangle \nonumber\\	\quad -n\,H\int_{\Sigma}\bigg(\langle N,a\rangle  \langle N,b\rangle - \langle x, a\rangle\langle x,b\rangle \bigg) d_{\Sigma}.  \end{eqnarray*} By   Stoke's Theorem,  the first integral in the right hand side above is zero. Hence,
		\begin{eqnarray}\label{integration by parts 2}	\int_{\Sigma}(|A|^2-n)\langle N,a\rangle\langle x,b\rangle d_{\Sigma}&=& \\ -n\,H\int_{\Sigma}\bigg(-\langle x, a\rangle\langle x,b\rangle &+& \langle N,a\rangle  \langle N,b\rangle \bigg) d_{\Sigma}. \nonumber  \end{eqnarray}
		The lemma follows by substituting formula (\ref{integration by parts 2}) in the expression $-\int_{\Sigma}\langle \Phi_{a,b},L\Phi_{a,b}\rangle\,d_{\Sigma}$  above.
		\end{proof}

We address first the case where the mean curvature $H$  is positive.

\vspace{0.2cm}	

\noindent For every linear map $\varphi: \mathbb{R}^{n+2}\rightarrow \mathbb{R}^{n+2}$ we consider the symmetric quadratic form $Q_{\varphi}: \mathbb{R}^{n+2}\times \mathbb{R}^{n+2}\rightarrow \mathbb{R}$ given by	\[Q_{\varphi}(a,b)= - \int_{\Sigma} \bigg\langle \Phi_{a,\varphi(a)}, L\Phi_{b,\varphi(b)}\bigg\rangle\,d_{\Sigma}.\]

\noindent \noindent Let $\mathcal{M}$ be the set $\mathcal{M}= SO(n+2)\times\mathbb{S}^{n+1}$ and let $F:\mathcal{M}\times \mathbb{R}\rightarrow \mathbb{R}$ be the function defined by $F(\varphi,a,\beta)=Q_{\beta\cdot \varphi}(a,a)$.
\begin{lemma}\label{eq critical point}
If  $(\varphi_0,a_0,\beta_0)\in \mathcal{M}\times \mathbb{R}$  is a critical point of   $F$ and the mean curvature $H$  is positive, then
\[
\int_{\Sigma} \Phi_{a_0,\beta_0\varphi_0(a_0)}\,d_{\Sigma}=0.
\]
\end{lemma}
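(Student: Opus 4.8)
The plan is to exploit the critical-point condition in the two groups of variables separately: the $SO(n+2)$-factor together with $a\in\mathbb{S}^{n+1}$, and the scalar $\beta$. I expect the $\beta$-derivative to produce a pointwise/integral identity relating the three scalar products $\langle x,a\rangle^2$, $\langle N,a\rangle^2$, $\langle x,b\rangle^2$ and their cross terms, while the $SO(n+2)\times\mathbb{S}^{n+1}$-derivative forces the integral $\int_\Sigma \Phi_{a_0,\beta_0\varphi_0(a_0)}\,d_\Sigma$ to lie in a controlled subspace (namely, to be $L^2$-orthogonal to the tangent directions of $\mathcal{M}$ applied to $\Phi$). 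Combining these with the explicit formula for $I(\Phi_{a,b},\Phi_{a,b})$ from Lemma \ref{expression quadratic} and with Lemma \ref{jacobi function} should pin the vanishing integral down.

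First I would write $b_0=\beta_0\varphi_0(a_0)$ and set $c_0=\int_\Sigma \Phi_{a_0,b_0}\,d_\Sigma\in\mathbb{R}^{n+2}$; the goal is $c_0=0$. Using $\Phi_{a,b}=-\langle x,a\rangle x+\langle N,a\rangle N+\langle x,b\rangle N$, this integral depends \emph{linearly} on $b$ through the operator $R$ of Lemma \ref{isomorphism}, so $c_0 = -\int_\Sigma\langle x,a_0\rangle x\,d_\Sigma + \int_\Sigma\langle N,a_0\rangle N\,d_\Sigma + R(b_0)$, and since $\Sigma$ is not totally geodesic (otherwise we are in the excluded case), $R$ is an isomorphism and $b_0$ is determined by the equation $c_0=0$ once we know $c_0=0$; more usefully, the map $b\mapsto\int_\Sigma\Phi_{a_0,b}\,d_\Sigma$ is an affine isomorphism onto $\mathbb{R}^{n+2}$. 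Next I would differentiate $F$ in the $\mathbb{S}^{n+1}$-direction: for a curve $a(t)$ with $a(0)=a_0$, $\dot a(0)=v\perp a_0$, we get $0=\frac{d}{dt}\big|_0 Q_{\beta_0\varphi_0}(a(t),a(t)) = -2\int_\Sigma\langle \partial_t\Phi_{a(t),\beta_0\varphi_0(a(t))},\,L\Phi_{a_0,b_0}\rangle = -2\int_\Sigma\langle \partial_t\Phi,\,L\Phi_{a_0,b_0}\rangle$. Since $LV=$ const when $V$ realizes equality in stability — here we do \emph{not} yet know $L\Phi_{a_0,b_0}$ is constant, so instead I use self-adjointness: $0=-2\int_\Sigma\langle \Phi_{a_0,b_0},\,L(\partial_t\Phi)\rangle$. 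The variation $\partial_t\Phi$ is again of the form $\Phi_{v,\beta_0\varphi_0(v)}$ plus terms, and as $v$ ranges over $a_0^\perp$ and $\varphi_0$ ranges over $SO(n+2)$ the vectors $\beta_0\varphi_0(v)$ sweep out all of $\mathbb{R}^{n+2}$; thus $\Phi_{a_0,b_0}$ is $L$-orthogonal to a large family of $\Phi_{v,w}$, which after pairing with Lemma \ref{jacobi function}'s Jacobi fields $f=\langle x,a\rangle\langle N,b\rangle-\langle x,b\rangle\langle N,a\rangle$ (which integrate to zero and satisfy $Lf=0$) should give $\int_\Sigma\Phi_{a_0,b_0}\,d_\Sigma=0$: indeed for any constant vector $e$, $\langle e,\int_\Sigma\Phi_{a_0,b_0}\rangle=\int_\Sigma\langle\Phi_{a_0,b_0},e\rangle$, and each coordinate $\langle\Phi_{a_0,b_0},e\rangle$ is a linear combination of the functions appearing in the Jacobi fields of Lemma \ref{jacobi function}, modulo the constant $L$ annihilates.

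Concretely, I would isolate the scalar derivative: $0=\partial_\beta\big|_{\beta_0} F = \partial_\beta\big|_{\beta_0} Q_{\beta\varphi_0}(a_0,a_0)$. Since $\Phi_{a_0,\beta\varphi_0(a_0)}$ depends affinely on $\beta$ (only the last term $\langle x,\beta\varphi_0(a_0)\rangle N$ carries $\beta$), writing $w_0=\varphi_0(a_0)$ and $\Psi=\langle x,w_0\rangle N$, we have $Q_{\beta\varphi_0}(a_0,a_0)=I(\Phi_{a_0,0}+\beta\Psi,\ \Phi_{a_0,0}+\beta\Psi)$ as a quadratic polynomial in $\beta$, and its $\beta$-derivative at $\beta_0$ vanishing gives $-\int_\Sigma\langle\Psi,L\Phi_{a_0,b_0}\rangle=0$, i.e. $\int_\Sigma\langle x,w_0\rangle\langle N,\,L\Phi_{a_0,b_0}\rangle=0$. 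By Lemma \ref{lemma 1} and Lemma \ref{tangent vector field}, $L\Phi_{a_0,b_0}=-(|A|^2-n)(\langle x,a_0\rangle x+\langle N,a_0\rangle N)+nH(\langle x,b_0\rangle x+\langle N,b_0\rangle N)+X$ with $X$ tangent, so $\langle N,L\Phi_{a_0,b_0}\rangle=-(|A|^2-n)\langle N,a_0\rangle+nH\langle N,b_0\rangle$ (tangent and $x$-components drop against $N$). Then the $\mathbb{S}^{n+1}$-derivative, evaluated at $v$, similarly pairs against $\langle x,v\rangle x$, $\langle N,v\rangle N$, $\langle x,\beta_0\varphi_0(v)\rangle N$, and since $\varphi_0$ is orthogonal, choosing $v=a_0$ is disallowed but nearby $v$'s and the $SO(n+2)$-variations together let $\varphi_0(v)$ be arbitrary; extracting all these identities and feeding them into Lemma \ref{expression quadratic} expresses $\int_\Sigma\langle\Phi_{a_0,b_0},e\rangle$, for each basis vector $e$, as an integral of a Jacobi field of the Lemma \ref{jacobi function} type, hence zero.

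\textbf{Main obstacle.} The delicate point is extracting, from a \emph{single} critical point $(\varphi_0,a_0,\beta_0)$, enough independent linear relations to conclude the full vector identity $\int_\Sigma\Phi_{a_0,b_0}\,d_\Sigma=0$ rather than just one scalar relation. The $SO(n+2)$-action is transitive enough on pairs $(a,\varphi(a))$, but one must carefully track that the tangent space to $\mathcal{M}$ at $(\varphi_0,a_0)$ maps \emph{onto} a spanning set of directions $\Phi_{v,w}$ modulo the kernel of $I(\cdot,\Phi_{a_0,b_0})$; verifying this surjectivity, and checking that the residual terms (the tangent vector fields $X_i$, and the constant that $L$ may fail to annihilate) genuinely contribute zero upon integration, is where the real work lies. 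A secondary subtlety is handling possible singularities of the isoperimetric hypersurface, for which the Morgan–Ritoré cut-off must be invoked to justify that $\Phi_{a,b}$ and its variations are admissible test vector fields in \eqref{stability inequality}.
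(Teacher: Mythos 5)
Your instinct — differentiate $F$ at the critical point, use Lemma \ref{expression quadratic}/Lemma \ref{jacobi function}, and combine the resulting scalar relations — is the right circle of ideas, and your $\beta$-derivative computation is essentially a correct (if incomplete) rederivation of the paper's equation (\ref{eq0}). But the argument has a genuine gap in the other direction, and as written it would not close.

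The paper does \emph{not} use the $\mathbb{S}^{n+1}$-derivative (the $a$-variation) at all. It uses the $SO(n+2)$-derivative. This matters for two reasons. First, under the $SO(n+2)$-variation $\varphi(t)=\varphi_0\exp(tK)$, the vector field $\Phi_{a_0,\beta_0\varphi(t)(a_0)}$ changes \emph{only} in its second slot, with $\partial_t\Phi=\langle x,\beta_0\varphi_0K(a_0)\rangle N$. Plugging this into the explicit formula of Lemma \ref{expression quadratic} for $Q_{\beta\varphi}(a_0,a_0)$ (and using the symmetry $\int\langle x,a\rangle\langle N,b\rangle=\int\langle x,b\rangle\langle N,a\rangle$ from Lemma \ref{jacobi function} to collapse the two half-terms from the product rule applied to $\tfrac12\langle N,b\rangle\langle x,b\rangle$), the derivative cleanly becomes $-2nH\int_\Sigma\langle\Phi_{a_0,b_0},\,\beta_0\varphi_0K(a_0)\rangle\,d_\Sigma$. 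Since $\{\varphi_0K(a_0):K^\top=-K\}=\langle\varphi_0(a_0)\rangle^\perp$, vanishing of this derivative forces $\int_\Sigma\Phi_{a_0,b_0}\,d_\Sigma$ to be parallel to $\varphi_0(a_0)$ when $\beta_0\neq0$; the $\beta$-derivative kills the remaining direction $\varphi_0(a_0)$, so the integral is zero. By contrast, the $a$-derivative you use produces $\partial_t\Phi=\Phi_{v,\beta_0\varphi_0(v)}$ with $v\perp a_0$, which changes \emph{both} slots at once; feeding it into the polarized form of Lemma \ref{expression quadratic} gives a relation that still involves the $(|A|^2-n)$ term, not a pure orthogonality statement about $\int_\Sigma\Phi_{a_0,b_0}$.

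Second, and more seriously, the step where you say ``as $v$ ranges over $a_0^\perp$ and $\varphi_0$ ranges over $SO(n+2)$ the vectors $\beta_0\varphi_0(v)$ sweep out all of $\mathbb{R}^{n+2}$'' is not a valid move: $(\varphi_0,a_0,\beta_0)$ is a fixed critical point and you are only entitled to first-order information there; you cannot let $\varphi_0$ range. This is exactly the ``main obstacle'' you flag, and the resolution — which you do not carry out — is that you should not need it: the $SO(n+2)$-tangent directions at the single fixed $\varphi_0$ already give the full hyperplane $\langle\varphi_0(a_0)\rangle^\perp$, and the $\beta$-direction supplies the missing line. Finally, even the scalar relation you do derive from $\partial_\beta F=0$ is left one step short: after writing $\langle N,L\Phi_{a_0,b_0}\rangle=-(|A|^2-n)\langle N,a_0\rangle+nH\langle N,b_0\rangle$, you still need to apply the integration-by-parts identity (\ref{integration by parts 2}) to convert $\int\langle x,w_0\rangle(|A|^2-n)\langle N,a_0\rangle$ into the $nH$-terms and conclude $\int_\Sigma\langle\Phi_{a_0,b_0},\varphi_0(a_0)\rangle\,d_\Sigma=0$. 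You also do not address the degenerate case $\beta_0=0$, which the paper treats separately.
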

\begin{proof}
The tangent space $T_{\varphi_0}SO(n+2)$  is given by the linear space $\{\varphi_0\cdot K \in \mathcal{L}(\mathbb{R}^{n+2}):\, K^{\top}=-K\}$. Taking the derivative of $F(\varphi,a,\beta)$ with respect to $\varphi$ and recalling the expression of $Q_{\varphi}(a,a)$ in Lemma \ref{expression quadratic} we have
\begin{eqnarray*}
0= -\frac{1}{2nH} DF(\varphi_0, a_0,\beta_0)(\varphi_0\cdot K)&=& \\
 \int_{\Sigma} -\langle x, a_0\rangle \langle x,\beta_0\varphi_0\cdot K(a_0)\rangle &+& \langle N,a_0\rangle \langle N,\beta_0\varphi_0\cdot K(a_0)\rangle +\\  \int_{\Sigma} \langle x,\beta_0\varphi_0(a_0)\rangle \langle N,\beta_0\varphi_0\cdot K(a_0)\rangle &=&
\int_{\Sigma}\bigg\langle \Phi_{a_0,\beta_0\varphi_0(a_0)}, \beta_0\varphi_0\cdot K(a_0)\bigg\rangle.
\end{eqnarray*}
Since $\{\varphi_0\cdot K(a_0):\, K^{\top}=-K \}=\langle \varphi_0(a_0)\rangle^{\perp}$, we conclude that the vector   $\int_{\Sigma}\Phi_{a_0,\beta_0\varphi_0(a_0)}\,d_{\Sigma}$ is parallel to $\varphi_0(a_0)$ unless $\beta_0=0$. On the other hand, we also have
\begin{equation}\label{eq0}
0= \frac{\partial F}{\partial \beta}(\varphi_0,a_0,\beta_0)= -2nH \int_{\Sigma} \bigg \langle \Phi_{a_0,\beta_0\varphi(a_0)}, \varphi_0(a_0)\bigg\rangle d_{\Sigma}.
\end{equation}
Hence, if $\beta_0\neq 0$, then $\int_{\Sigma} \Phi_{a_0,\beta_0\varphi_0(a_0)}\,d_{\Sigma}=0$. If $\beta_0=0$, then  (\ref{eq0}) is also true when $\varphi_0$ is replaced by any orthogonal map $\varphi$. This implies   $\int_{\Sigma} \Phi_{a_0,0}\,d_{\Sigma}=0$ as well. This completes the proof of the lemma. 
\end{proof}
\vspace{0.2cm}

\noindent Our goal is to prove existence of a  critical point $(\varphi_0,a_0,\beta_0)$ for $F$ using the standard mountain pass variational theory. For this we consider the max-min number defined below:
\[
m_1= \sup_{\phi\in[\mathcal{M}]}\min_{(\varphi,a, \beta)\in \phi(\mathcal{M})}\,F(\varphi,a,\beta),
\] 
where $[\mathcal{M}]$ is the class of maps $\phi: \mathcal{M}\rightarrow \mathcal{M}\times \mathbb{R}$ that are homotopic to $\phi_0(a,\varphi)=(a,\varphi,0)$. First, let us show that $m_1\leq 0$. Indeed, each connected component of \[
Z=\{( Id, v,\beta )\in \mathcal{M}\times \mathbb{R}: v\,\text{ a first eigenvector of}\, Q_{\beta\cdot Id}\}\] is unbounded in the directions $\beta \rightarrow \pm \infty$. Hence, every $\phi\in [\mathcal{M}]$ must satisfy $\phi(\mathcal{M})\cap Z\neq  \emptyset$. The claim now follow since the first eigenvalue of $Q_{\beta \cdot Id}$ is non-positive  for every $\beta$. 
Indeed, if $\{e_1,\ldots, e_{n+2}\}$ is an orthonormal basis of $\mathbb{R}^{n+2}$, then 
\begin{eqnarray*}
\sum_{i=1}^{n+2}Q_{\beta\,Id}(e_i,e_i)= \sum_{i=1}^{n+2}\int_{\Sigma} (|A|^2-n)\bigg( -\langle x, e_i\rangle^2 + \langle N,e_i\rangle^2 \bigg)d_{\Sigma}\quad\quad\quad \quad\\
	-2nH\beta \sum_{i=1}^{n+2} \int_{\Sigma}\bigg(\langle N,e_i\rangle  \langle N,e_i\rangle - \langle x, e_i\rangle\langle x,e_i\rangle + \frac{\beta}{2}\langle N,e_i\rangle \langle x,e_i\rangle \bigg) \,d_{\Sigma}=0.\end{eqnarray*}
\noindent  In particular, $m_1$ is well defined. Note that if $m_1$ is a  critical value of $F$, then it  will follow from Lemma \ref{eq critical point} and  the volume preserving stability of $\Sigma$ that $m_1\geq 0$, and hence $m_1=0$.  Next we study the behavior of $F$ at the ends of $\mathcal{M}\times\mathbb{R}$: 
\begin{lemma}\label{infinite}
	There exists  constants $C_1>0$ and $C_2>0$ such that $|\nabla F|(\varphi,a,\beta)\geq C_1$ for every  $|\beta|\geq C_2$.
\end{lemma}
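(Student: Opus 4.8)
The claim is a coercivity/growth estimate for $F$ at the $\beta$-ends of $\mathcal{M}\times\mathbb{R}$, and the natural way to obtain it is to control $F$ and its $\beta$-derivative explicitly as polynomials in $\beta$ with coefficients that are integrals over $\Sigma$. From Lemma \ref{expression quadratic} applied with $b=\beta\varphi(a)$ we have, writing $\varphi$ for a general element of $SO(n+2)$ and suppressing the $d_\Sigma$,
\[
F(\varphi,a,\beta)= \int_{\Sigma}(|A|^2-n)\bigl(-\langle x,a\rangle^2+\langle N,a\rangle^2\bigr)
-2nH\int_{\Sigma}\Bigl(\beta\langle N,a\rangle\langle N,\varphi(a)\rangle-\beta\langle x,a\rangle\langle x,\varphi(a)\rangle+\tfrac{\beta^2}{2}\langle N,\varphi(a)\rangle\langle x,\varphi(a)\rangle\Bigr),
\]
so $F$ is a quadratic polynomial in $\beta$ whose leading coefficient is $-nH\int_\Sigma\langle N,\varphi(a)\rangle\langle x,\varphi(a)\rangle$. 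Correspondingly $\dfrac{\partial F}{\partial\beta}= -2nH\int_\Sigma\bigl(\langle N,a\rangle\langle N,\varphi(a)\rangle-\langle x,a\rangle\langle x,\varphi(a)\rangle\bigr)-2nH\beta\int_\Sigma\langle N,\varphi(a)\rangle\langle x,\varphi(a)\rangle$, which is \emph{affine} in $\beta$ with slope $-2nH\int_\Sigma\langle N,\varphi(a)\rangle\langle x,\varphi(a)\rangle$. The whole point will be that this slope is bounded away from $0$ uniformly in $(\varphi,a)$.

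\textbf{Key steps.} First I would establish the uniform lower bound
\[
c_0:=\min_{(\varphi,a)\in\mathcal{M}}\Bigl|\,nH\!\int_\Sigma\langle N,\varphi(a)\rangle\langle x,\varphi(a)\rangle\,d_\Sigma\,\Bigr|>0.
\]
As $(\varphi,a)$ ranges over the compact set $\mathcal{M}=SO(n+2)\times\mathbb{S}^{n+1}$, the vector $v:=\varphi(a)$ ranges over all of $\mathbb{S}^{n+1}$, so it suffices to show $g(v):=nH\int_\Sigma\langle N,v\rangle\langle x,v\rangle\,d_\Sigma\neq 0$ for every unit vector $v$. Since $H>0$ in this case, $g(v)=nH\,v^{T}\!\Bigl(\int_\Sigma \tfrac12(N x^{T}+xN^{T})\,d_\Sigma\Bigr)v$ is the quadratic form of the symmetric matrix $S:=\tfrac12\int_\Sigma(Nx^{T}+xN^{T})\,d_\Sigma$; its vanishing on a unit vector would say $v$ is a null vector of $S$. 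Here I would invoke Lemma \ref{isomorphism}: the map $R(b)=\int_\Sigma\langle x,b\rangle N\,d_\Sigma$ is an isomorphism (the hypothesis ``not totally geodesic'' holds, since a totally geodesic $\Sigma$ in $\mathbb{RP}^{n+1}$ is exactly the excluded two-fold cover of $\mathbb{RP}^n$, handled separately), so $S=\tfrac12(R+R^{T})$ has a definite part; more carefully, $g(v)=nH\,\langle R(v),v\rangle$, and if $g(v)=0$ then $\langle R(v),v\rangle=0$. This alone does not give invertibility of $S$, so the cleaner route is to show directly that $g$ never vanishes by a contradiction argument parallel to the proof of Lemma \ref{isomorphism}: if $\int_\Sigma\langle x,v\rangle\langle N,v\rangle\,d_\Sigma=0$, feed $V=\langle x,v\rangle N$ into the stability inequality and the chain of identities there (using $-2A(v^{\top})=c^{\top}$, antipodal invariance giving $2\langle N,v\rangle=nH\langle x,v\rangle$, hence $0=\frac{2}{nH}\int_\Sigma\langle x,v\rangle^2$) forces $\Sigma$ totally geodesic, a contradiction. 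Thus $c_0>0$.

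\textbf{Conclusion and main obstacle.} Once $c_0>0$ is in hand, the slope of $\beta\mapsto\frac{\partial F}{\partial\beta}(\varphi,a,\beta)$ has absolute value $\geq 2c_0$ uniformly, while its value at $\beta=0$ is $-2nH\int_\Sigma(\langle N,a\rangle\langle N,\varphi(a)\rangle-\langle x,a\rangle\langle x,\varphi(a)\rangle)$, which is bounded by a constant $C_3$ depending only on $\Sigma$ (the integrands are bounded by $1$ in absolute value and $\mathcal{M}$ is compact, $H$ is a fixed number). Therefore $\bigl|\frac{\partial F}{\partial\beta}(\varphi,a,\beta)\bigr|\geq 2c_0|\beta|-C_3$, and since $|\nabla F|\geq\bigl|\frac{\partial F}{\partial\beta}\bigr|$, choosing $C_2:=\frac{C_3+1}{2c_0}$ and $C_1:=1$ gives $|\nabla F|(\varphi,a,\beta)\geq 1$ whenever $|\beta|\geq C_2$, which is the assertion. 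The only genuinely non-formal step is the uniform non-vanishing $c_0>0$; everything else is bookkeeping with the explicit polynomial form of $F$. I expect the non-vanishing to be the main obstacle, and the safest way to secure it is to reuse verbatim the stability-inequality argument of Lemma \ref{isomorphism} (rather than a soft compactness claim about $S$), since that argument already shows $\langle x,v\rangle\langle N,v\rangle$ cannot integrate to zero against the background hypothesis that $\Sigma$ is volume preserving stable, $H>0$, and not totally geodesic.
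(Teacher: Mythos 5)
There is a genuine gap at the crucial step, and it is exactly the one you flag as ``the main obstacle.'' Your argument reduces $|\nabla F|$ to the single component $|\partial F/\partial\beta|$, and hinges on showing that
\[
c_0=\min_{v\in\mathbb{S}^{n+1}}\Big|\,nH\!\int_\Sigma\langle N,v\rangle\langle x,v\rangle\,d_\Sigma\,\Big|>0.
\]
But $\int_\Sigma\langle N,v\rangle\langle x,v\rangle\,d_\Sigma=\langle R(v),v\rangle$, and (by Lemma~\ref{jacobi function}) $R$ is a \emph{symmetric} linear map; $R$ being an isomorphism (Lemma~\ref{isomorphism}) says only that its eigenvalues are nonzero, not that they share a sign. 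If $R$ is indefinite --- and nothing rules this out --- then $\langle R(v),v\rangle$ vanishes along an entire cone of unit vectors $v$, so $c_0=0$ and your lower bound $|\partial F/\partial\beta|\geq 2c_0|\beta|-C_3$ collapses. Your proposed fix (rerunning Lemma~\ref{isomorphism} with $V=\langle x,v\rangle N$) does not repair this: that argument requires $V$ to be a valid test function for the stability inequality, i.e.\ $\int_\Sigma V\,d_\Sigma=R(v)=0$. You only have the scalar vanishing $\langle R(v),v\rangle=0$, which is strictly weaker, so you cannot conclude $I(V,V)\geq 0$, let alone $LV=\text{const}$, and the chain of identities you cite never gets started.

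The paper avoids this by not throwing away the $\varphi$- and $a$-components of $\nabla F$. From Lemma~\ref{eq critical point} one sees that small $|\nabla F|$ controls the \emph{full vector} $\int_\Sigma\Phi_{a,\beta\varphi(a)}\,d_\Sigma=\int_\Sigma(-\langle x,a\rangle x+\langle N,a\rangle N)\,d_\Sigma+\beta R(\varphi(a))$, not merely its projection onto $\varphi(a)$: the $\varphi$-derivatives control the orthogonal components (via $\{\varphi\cdot K(a)\}=\langle\varphi(a)\rangle^{\perp}$) and the $\beta$-derivative controls the parallel one. Then, along a putative sequence with $|\beta_l|\to\infty$ and $|\nabla F|\to 0$, the bounded first integral forces $R(\varphi_l(a_l))\to 0$, and a compactness/limit argument contradicts the \emph{isomorphism} of $R$ --- which genuinely needs only nonzero eigenvalues, not definiteness. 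So you should use the full gradient and the vector-valued balancing condition rather than the scalar quadratic form $\langle R(v),v\rangle$.
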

\begin{proof}
Assume $(\varphi_l,a_l,\beta_l)$  is a sequence of points in $\mathcal{M}\times \mathbb{R}$ satisfying  $\lim_{l\rightarrow \infty}|\beta_l|=\infty$ and such that $|\nabla F|(\varphi_l,a_l,\beta_l)=\varepsilon_l\rightarrow
 0$. We have by Lemma \ref{eq critical point} that 
 \[
\bigg|\int_{\Sigma} \Phi_{a_l,\beta_l\varphi_l(a_l)}\bigg|= \bigg|\int_{\Sigma} -\langle x,a_l\rangle x + \langle N,a_l\rangle N+ \beta_l \langle x,\varphi_l(a_l)\rangle N\bigg|\leq C\,( \frac{\varepsilon_l}{\beta_l} +\varepsilon_l).
 \]
Note by compactness of $\mathcal{M}$ that $(\varphi_l,a_l)$ has a convergent subsequence. Since $|\beta_l|\rightarrow \infty$, we conclude that \[\lim_{l\rightarrow \infty}R(\varphi_l(a_l))= \lim_{l\rightarrow \infty}\int_{\Sigma} \langle x, \varphi_l(a_l)\rangle N\,d_{\Sigma}= 0.\] This  is a contradiction since $\varphi_l(a_l)\in \mathbb{S}^{n+1}$ and the linear map $R$ is an  isomorphism by Lemma \ref{isomorphism}. 
\end{proof}

\noindent By Lemma \ref{infinite}, the functional $F$ satisfies the Palais-Smale condition  (P.-S.): namely, any sequence satisfying $|F(u_i)|\leq c$ and $|\nabla F(u_i)|\rightarrow 0$ has a convergent subsequence. The following finite dimensional min-max principle is proved in Struwe \cite{S}: 

\noindent \textbf{Min-max principle.} \textit{Suppose $M$ is a complete Riemannian manifold and $F\in C^{\infty}(M)$ satisfies  (P.-S.). Also suppose that $\mathcal{E}$ is a collection of sets which is invariant with respect to any smooth semi-flow $\Psi: M\times [0, \infty)\rightarrow M$ such that $\Psi(\cdot, 0)=id$, $\Psi(\cdot, t)$ a diffeomorphism of $M$ for any $t\geq 0$, and $F(\Psi(u,t))$ is non-decreasing in $t$ for any $u\in M$. If 
\[
\mu:= \inf_{E\in \mathcal{E}}\sup_{u\in E}F(u)
\]
is finite, then $\mu$ is a critical value of $F$.}
\vspace{0.2cm}

\noindent Let us now address the case $H=0$ proved in \cite{dCRR}. The proof is included  for completeness.  It follows from Lemma \ref{isomorphism} the existence of an unique linear map $\varphi_1: \mathbb{R}^{n+2}\rightarrow \mathbb{R}^{n+2}$ such that \[\int_{\Sigma} \langle x,\varphi_1(a)\rangle N \,d_{\Sigma}=\int_{\Sigma} \bigg(\langle x,a\rangle x - \langle N,a\rangle N\bigg)d_{\Sigma}.	\]	In other words,  the vector field $\Phi_{a,\varphi_1(a)}$ defined in (\ref{definition vector field}) is balanced. By Lemma \ref{expression quadratic}, we have  	\[ 0\leq	I(\Phi_{a,\varphi_1(a)},\Phi_{a,\varphi_1(a)})= \int_{\Sigma} (|A|^2-n)\bigg( \langle N,a\rangle^2 - \langle x,a\rangle^2 \bigg)d_{\Sigma},	\]
for every $a\in \mathbb{R}^{n+2}$. On the other hand, if $\{e_1,\ldots,e_n\}$ is an orthonormal basis of $\mathbb{R}^{n+2}$ with $e_1=\frac{a}{|a|}$, then \[
\sum_{i=1}^{n+2}\int_{\Sigma} (|A|^2-n)\bigg( \langle N,e_i\rangle^2 - \langle x,e_i\rangle^2 \bigg)d_{\Sigma}=0.
\] Therefore, $I(\Phi_{a,\varphi_1(a)},\Phi_{a,\varphi_1(a)})=0$ for every vector $a\in \mathbb{R}^{n+2}$.

\vspace{0.2cm}

\noindent Despite the conclusion  obtained for $H>0$ be weaker than the one obtained for $H=0$, it is enough to complete the proof of the theorem:

\vspace{0.2cm}

\noindent \textbf{Assertion.} \textit{If  there are vectors $a_0\in\mathbb{R}^{n+2}$ and $b_0\in \mathbb{R}^{n+2}$ such that  $I(\Phi_{a_0,b_0},\Phi_{a_0,b_0})=0$ and  \[ \int_{\Sigma}\Phi_{a_0,b_0} \,d_{\Sigma}=0,\]
  then $\Sigma$ is congruent to either a geodesic sphere or a  Clifford hypersurface $\mathbb{S}^{n_1}(R_1)\times \mathbb{S}^{n_2}(R_2)\subset \mathbb{S}^{n+1}$.}
\vspace{0.2cm}

\noindent \textit{Proof.} It follows from the variational characterization of volume preserving stable hypersurfaces that $L\Phi_{a_0,b_0}=c$ for some  vector $c\in \mathbb{R}^{n+2}$. By Lemma \ref{tangent vector field}, this is equivalent to	\begin{eqnarray*}
	-(|A|^2-n)\bigg(\langle x,a_0\rangle x +\langle N,a_0\rangle N \bigg) &+& nH\bigg(\langle x,b_0\rangle x + \langle N,b_0\rangle N\bigg) \\
	&+& 2\bigg(A^2(a_0^{\top}) - A(b_0^{\top}) - a_0^{\top} \bigg) = c. 
\end{eqnarray*}   If we use the notation $r(x)=(|A|^2-n)(x)$, then 
\begin{equation}\label{eq6}
\langle x, c+r(x)a_0-nH\,b_0\rangle=0 \quad \text{and}\quad
\langle N, c+r(x)a_0- nH\,b_0\rangle=0,\end{equation} 
for every $x\in \Sigma$. In particular,  
\begin{equation}\label{eq7}
c^{\top}+ \langle x,a_0\rangle \nabla r +r(x) a_0^{\top} - nH\,b_0^{\top}=0.
\end{equation}
Let $x_0\in \Sigma$ be the point of maximum  for  the function $r(x)$. 
It follows from equations (\ref{eq6}) and (\ref{eq7}) that $c= nH\,b_0-r(x_0)\,a_0$. Similarly, if $x_1\in \Sigma$ is the point of minimum for  $r(x)$, then equations (\ref{eq6}) and (\ref{eq7})  implies that $c= nH\,b_0-r(x_1)\,a_0$. Hence,
\[
r(x_0)=r(x_1)
\]
 and $r(x)=(|A|^2-n)(x)$ is a constant function. Consequently,  we can find a constant $\beta_0$  for which $|A|^2-n -\beta_0nH=0$. Therefore,		\begin{eqnarray}\label{eq a}0=\int_{\Sigma} \bigg(|A|^2-n -\beta_0 \,nH\bigg)\langle N,a\rangle \langle x,a\rangle d_{\Sigma}= \quad\quad\quad\quad\quad\quad\quad\quad\quad\quad\quad\nonumber\\- nH\int_{\Sigma} \bigg(\langle N,a\rangle \langle N,a\rangle-\langle x,a\rangle \langle x,a\rangle  + \beta_0 \langle x,a\rangle \langle N,a\rangle \bigg)\end{eqnarray}
for every $a \in \mathbb{R}^{n+2}$. The second equality follows from the integration by parts   in (\ref{integration by parts 2}).  
If  $H=0$, then  $|A|^2\equiv n$ (Lemma \ref{jacobi function} and Lemma \ref{isomorphism}). By the result of   Chern, do Carmo, and Kobayashi \cite{CdCK} (see also Lawson \cite{L}), $\Sigma$ is  congruent to the Clifford hypersurface $\mathbb{S}^{p}(\sqrt{\frac{p}{n}})\times \mathbb{S}^{n-p}(\sqrt{\frac{n-p}{n}})$ for some $p$. From now on, $H>0$.
Since the quadratic in the right hand side above  is symmetric (Lemma \ref{jacobi function}), we obtain that	\[\int_{\Sigma} \bigg(-\langle x,a\rangle x+ \langle N,a\rangle N +\beta_0\langle x,a\rangle N \bigg) d_{\Sigma}=0,\] for every $a\in \mathbb{R}^{n+2}$. In other words, the vector fields $\Phi_{a,\beta_0 a}$ are balanced.   On the other hand,  we have by Lemma \ref{expression quadratic} that  \[I(\Phi_{a,\beta_0 a},\Phi_{a,\beta_0 a})=\int_{\Sigma}(|A|^2-n-\beta_0nH)\bigg(\langle N,a\rangle^2-\langle x,a\rangle^2+\beta_0\langle x,a\rangle\langle N,a\rangle
\bigg).
\]  Hence, $I(\Phi_{a,\beta_0 a},\Phi_{a,\beta_0 a})=0$ for every  $a\in \mathbb{R}^{n+2}$. It follows from the volume preserving stability of $\Sigma$  that  $L(\Phi_{a,\beta_0\cdot a})= c$. Since $\beta_0$ satisfies $|A|^2-n-\beta_0nH=0$, we conclude simultaneously that both  $c$ and the vector field $X$ from Lemma \ref{tangent vector field} are zero:  \[
	A^2(a^{\top}) - \beta_0\,A(a^{\top}) - \,a^{\top}=0,
	\]
for every $a\in \mathbb{R}^{n+2}$. Hence,  $A^2 - \beta_0\,A - Id= 0$ as a $2$-tensor. As a consequence, the principal curvatures of $\Sigma$ are constant and equal to:
	\[
	\mu_{\pm}=\frac{\beta_0\pm \sqrt{\beta_0^2 + 4}}{2}.
	\] 
	In other words, $\Sigma$ is an isoparametric hypersurface with at most two principal curvatures. If  the   principal curvatures are equal, then $\Sigma$ is  totally umbilical  and, hence, a geodesic sphere. If the  principal curvatures are  $\mu_{+}$ with multiplicity $n_1$ and $\mu_{-}$ with multiplicity $n_2$, then  $\Sigma$ is congruent to the Clifford hypersurface $\mathbb{S}^{n_1}(\cos(r))\times \mathbb{S}^{n_2}(\sin(r))$  for some $r\in (0,\frac{\pi}{2})$, see Theorem 3.29 in \cite{CR}.
\end{proof}

\subsection{The isoperimetric problem}
In this section we use Theorem \ref{main theorem} to discuss the isoperimetric problem in $\mathbb{RP}^n$. 

Let $M$ be a Riemannian manifold.   We denote the $(n+1)$-dimensional Hausdorff measure of  a region $\Omega\subset M$   by $|\Omega|$. The $n$-dimensional Hausdorff measure of  $\partial \Omega$ is denoted by $|\partial\Omega|$.
The class of  regions considered here are  those of finite perimeter, see \cite{BZ}.
A region $\Omega\subset M$ is called an \textit{isoperimetric region} if
\[|\partial \Omega|=\inf \{|\partial \Omega^{\prime}|\,:\,\Omega^{\prime} \subset M \quad \textit{and} \quad |\Omega^{\prime}|=|\Omega|\}.\]
In this case, the hypersurface $\Sigma=\partial \Omega$ is called an \textit{isoperimetric hypersurface}.
For a recent reference on the regularity of isoperimetric hypersurfaces see \cite{M}:

\begin{theorem}
	\textit{If $(M^{n+1},g)$ is a  closed or homogeneous Riemannian manifold, then for any $0<t< \text{vol}(M)$ there exists an isoperimetric region $\Omega$ satisfying $|\Omega|=t$. Moreover, $\Sigma=\partial \Omega$ is smooth up to a closed set of Hausdorff dimension $n-7$. The regular part is a constant mean curvature volume preserving stable hypersurface.}
\end{theorem}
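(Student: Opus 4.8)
The statement to prove is the regularity theorem for isoperimetric hypersurfaces in closed or homogeneous manifolds: existence for all volumes, smoothness away from a set of codimension $\geq 8$, and that the regular part is a constant mean curvature volume preserving stable hypersurface. The plan is to assemble this from the standard geometric measure theory machinery, since this is a known result and the paper cites \cite{A}, \cite{SS}, \cite{M}.

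First I would establish existence. Fix $0 < t < \Vol(M)$ and take a minimizing sequence $\Omega_i$ of Caccioppoli sets with $|\Omega_i| = t$ and $|\partial\Omega_i| \to \inf$. By compactness of $BV$ (using that $M$ is closed, or in the homogeneous case using the transitive isometry group to prevent mass escaping to infinity and to recenter the sets), a subsequence converges in $L^1_{loc}$ to a set $\Omega$ of finite perimeter. Lower semicontinuity of perimeter under $L^1$ convergence gives $|\partial\Omega| \leq \liminf |\partial\Omega_i|$, and the volume constraint passes to the limit, so $|\Omega| = t$ and $\Omega$ realizes the infimum. In the homogeneous non-compact case one must argue that no volume is lost in the limit; this is the delicate point of the existence half, handled by a concentration-compactness / deformation argument (see \cite{M}, \cite{RR}).

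Next, regularity. The isoperimetric region $\Omega$ is an almost minimizer for perimeter in the sense of Almgren: because one can adjust the volume of a competitor by a small local deformation at cost controlled linearly by the volume change, $\partial\Omega$ satisfies a $(\mathbf{M},\xi,\delta)$-minimality condition, or equivalently is a $\Lambda$-minimizer of perimeter for some $\Lambda$ depending on $M$ and $t$. Almgren's regularity theory \cite{A} (in the refined form of Schoen--Simon--Yau type interior estimates, \cite{SS}) then applies: the reduced boundary $\partial^*\Omega$ is a smooth embedded hypersurface, and the singular set $\Sigma \setminus \partial^*\Omega$ has Hausdorff dimension at most $n-7$ (as for area-minimizing hypersurfaces, since the dimensional reduction argument and the Simons cone obstruction are insensitive to the $\Lambda$-minimality perturbation). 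On the regular part, the first variation formula with a Lagrange multiplier for the volume constraint shows the mean curvature is constant; the second variation being nonnegative among volume-preserving compactly supported normal variations of the regular part is exactly the volume preserving stability inequality \eqref{stability inequality}.

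The main obstacle I anticipate is the existence step in the genuinely non-compact homogeneous case (if one does not simply restrict to $M$ closed, which suffices for the application to $\mathbb{RP}^{n+1}$): ruling out loss of volume at infinity requires a careful argument exploiting homogeneity. For the compact case, and in particular for $M = \mathbb{RP}^{n+1}$, existence is immediate from $BV$ compactness and the only real content is citing Almgren's partial regularity. Since the paper presents this theorem as a cited background result (with references \cite{A}, \cite{SS}, \cite{M}), the intended ``proof'' is precisely this assembly of standard results rather than anything new, so I would keep the argument at the level of a sketch with appropriate citations.
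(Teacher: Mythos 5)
Your proposal is correct and matches the paper's intent: the paper does not prove this theorem but cites it as standard background (Almgren \cite{A}, Schoen--Simon \cite{SS}, Morgan \cite{M}), and you correctly identify this and assemble the same references into an accurate sketch of the existence (BV compactness plus concentration-compactness in the homogeneous case), $\Lambda$-minimality, Almgren partial regularity with codimension-$7$ singular set, and first/second variation with Lagrange multiplier. Nothing is missing relative to what the paper itself supplies.
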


\begin{lemma}[Morgan-Ritor\'{e} \cite{MR}]\label{cut-off}
Let $\Sigma^k$ be a smooth, embedded submanifold of bounded mean curvature in $\mathbb{R}^{n+1}$ with compact singular set $\Sigma_0= \overline{\Sigma}-\Sigma$, satisfying $\mathcal{H}^{k-2}(\Sigma_0)=0$. Then, given $\varepsilon>0$, there is a smooth function $\varphi_{\varepsilon}: \overline{\Sigma} \rightarrow [0, 1]$
supported in $\Sigma$ such that
\begin{enumerate}
	\item $\mathcal{H}^k(\{\varphi_{\varepsilon}\neq 1 \})< \varepsilon$;
	\item $\int_{\Sigma} |\nabla \varphi_{\varepsilon}|^2\,d_{\Sigma}<\varepsilon$;
	\item $\int_{\Sigma}|\Delta \varphi_{\varepsilon}|\,d_{\Sigma}<\varepsilon$.
\end{enumerate}
\end{lemma}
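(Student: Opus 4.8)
The plan is to realize $\varphi_\varepsilon$ as a logarithmic cut‑off of the distance to $\Sigma_0$ and to reduce all three estimates to a single statement about how fast the $\mathcal{H}^k$‑mass of $\Sigma$ accumulates near $\Sigma_0$.

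First I would localize and record a density bound. Since $\Sigma_0$ is compact, $\overline{\Sigma}$ has locally finite $\mathcal{H}^k$‑measure and $|\vec H_\Sigma|$ is bounded, the monotonicity formula yields $r_0>0$ and $C_0$ with $\mathcal{H}^k(\Sigma\cap B(x,r))\le C_0 r^k$ for all $x\in\mathbb{R}^{n+1}$ and $r\le r_0$. Set $d(x)=\mathrm{dist}(x,\Sigma_0)$ and let $\tilde d$ be a Stein--Whitney regularized distance: $\tilde d\in C^\infty(\mathbb{R}^{n+1}\setminus\Sigma_0)$ with $c_1 d\le\tilde d\le c_2 d$, $|\nabla\tilde d|\le C$ and $|\nabla^2\tilde d|\le C\,\tilde d^{-1}$. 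For $0<a<b\le r_0$ pick a smooth profile $\eta=\eta_{a,b}\colon[0,\infty)\to[0,1]$, equal to $0$ on $[0,a]$, equal to $1$ on $[b,\infty)$, and a smoothing of the logarithmic interpolant $\frac{\log(t/a)}{\log(b/a)}$ on $[a,b]$, so that $0\le\eta'(t)\le\frac{C}{t\log(b/a)}$ and $|\eta''(t)|\le\frac{C}{t^2\log(b/a)}$ there. Put $\varphi_\varepsilon=\eta(\tilde d)$; this is a smooth function on $\overline{\Sigma}$ with values in $[0,1]$ that vanishes on the neighborhood $\{\tilde d\le a\}$ of $\Sigma_0$, hence is supported in $\Sigma$.

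Now the estimates. From $|\nabla_\Sigma\tilde d|\le|\nabla\tilde d|\le C$ and the Weingarten identity $\Delta_\Sigma\tilde d=\mathrm{tr}_{T\Sigma}(\nabla^2\tilde d)+\langle\nabla\tilde d,\vec H_\Sigma\rangle$, which gives $|\Delta_\Sigma\tilde d|\le C\,\tilde d^{-1}$ for $\tilde d\le r_0$, the chain rule yields pointwise on $\{a\le\tilde d\le b\}$ (and $0$ elsewhere)
\[
|\nabla_\Sigma\varphi_\varepsilon|\le\frac{C}{d\,\log(b/a)},\qquad |\Delta_\Sigma\varphi_\varepsilon|\le\frac{C}{d^{2}\,\log(b/a)}.
\]
Writing $V(t)=\mathcal{H}^k(\Sigma\cap\{d\le t\})$ (finite, continuous from above, with $V(0^+)=0$ because $\Sigma\cap\Sigma_0=\emptyset$), a Stieltjes computation with integration by parts gives
\[
\int_{\Sigma\cap\{a\lesssim d\lesssim b\}}\frac{d\mathcal{H}^k}{d^{2}}\ \le\ C\Big(\frac{V(Cb)}{b^{2}}+\int_{a}^{Cb}\frac{V(t)}{t^{3}}\,dt\Big),
\]
whence $\int_\Sigma|\nabla_\Sigma\varphi_\varepsilon|^2\le\frac{C}{\log(b/a)^2}(\ \cdots\ )$, $\int_\Sigma|\Delta_\Sigma\varphi_\varepsilon|\le\frac{C}{\log(b/a)}(\ \cdots\ )$, while $\mathcal{H}^k(\{\varphi_\varepsilon\ne1\})\le V(Cb)$. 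Property $(1)$ is then immediate, since $V(Cb)\to0$ as $b\to0$ by continuity from above.

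The heart of the matter — and the only place where the hypothesis $\mathcal{H}^{k-2}(\Sigma_0)=0$ is used — is to make the bracketed term $o\big(\log(b/a)^2\big)$ along some sequence $a\to0$ with $b$ fixed, which forces $(2)$ and $(3)$. This is precisely the assertion that $\Sigma_0$ has vanishing $2$‑capacity with respect to the (Ahlfors $k$‑regular, by the density bound) measure $\mathcal{H}^k\llcorner\Sigma$ near $\Sigma_0$, and it follows from $\mathcal{H}^{k-2}(\Sigma_0)=0$ by the standard comparison between Hausdorff content and $2$‑capacity: at each dyadic scale one covers $\Sigma_0$ by balls with $\sum(\text{radii})^{k-2}$ arbitrarily small, uses the density bound $\mathcal{H}^k(\Sigma\cap B_r)\le C_0 r^k$ to convert radii into $\mathcal{H}^k$‑mass, and sums the resulting single‑ball logarithmic cut‑offs. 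I expect this capacitary step to be the main obstacle; everything else is bookkeeping. (In the intended application $\Sigma_0$ is the singular set of a stable constant‑mean‑curvature hypersurface, so $\dim_{\mathcal H}\Sigma_0\le k-7$ with Minkowski‑type control; then $V(t)\le C\,t^{2+\delta}$ for some $\delta>0$ directly from the covering argument, $\int_0^{Cb}V(t)\,t^{-3}\,dt$ converges outright, and the estimate becomes elementary.)
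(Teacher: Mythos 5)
The paper does not give a proof of this lemma; it is quoted from Morgan--Ritor\'e \cite{MR}, whose argument covers the compact singular set by finitely many balls $B(p_i,r_i)$ with $\sum_i r_i^{k-2}$ small, builds a per-ball logarithmic cutoff vanishing on $B(p_i,r_i^2)$ and equal to $1$ outside $B(p_i,r_i)$, and multiplies these together, estimating $\int|\nabla\varphi_i|^2$ and $\int|\Delta\varphi_i|$ one ball at a time from the monotonicity density bound. That multi-scale covering is the whole point at the sharp exponent $k-2$, and it is exactly what your single radial function $\varphi_\varepsilon=\eta(\tilde d)$ cannot replicate.

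After your Stieltjes step the single-function approach requires $\frac{V(Cb)}{b^2}+\int_a^{Cb}V(t)\,t^{-3}\,dt=o\big(\log(b/a)\big)$ along some sequence $a\to 0$ --- note $o\big(\log(b/a)\big)$, not $o\big(\log(b/a)^2\big)$ as you write: the Laplacian estimate (3) carries only one factor of $\log(b/a)$ in the denominator and is therefore the binding constraint. But $\mathcal{H}^{k-2}(\Sigma_0)=0$ together with the density bound does \emph{not} give the single-scale Minkowski estimate $V(t)=\mathcal{H}^k(\Sigma\cap\{d\le t\})=o(t^2)$ that this requires: the hypothesis only produces covers of $\Sigma_0$ by balls of arbitrarily disparate radii with $\sum r_i^{k-2}$ small, and Hausdorff measure does not control Minkowski content. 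The capacity comparison you invoke does show that $\Sigma_0$ has vanishing $2$-capacity for $\mathcal{H}^k\llcorner\Sigma$, but that says \emph{some} test function has small Dirichlet energy --- not the particular radial one you chose --- and it says nothing at all about $\int|\Delta\varphi|$, a second-order quantity that capacity does not govern. Your own repair sketch (``cover at dyadic scales \ldots{} sum the resulting single-ball logarithmic cut-offs'') is precisely the Morgan--Ritor\'e construction, so it builds a different cutoff function and does not close the argument for $\eta(\tilde d)$. The same confusion between Hausdorff dimension and Minkowski content also undermines the final parenthetical: $\dim_{\mathcal{H}}\Sigma_0\le k-7$ alone does not give $V(t)\le Ct^{2+\delta}$, although Minkowski-type estimates for regularity-theory singular sets are available from other sources. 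To prove the lemma as stated you should abandon the global distance cutoff in favor of the per-ball product.
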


\noindent As pointed out in  \cite{MR},  Lemma \ref{cut-off}  extends the stability inequality  to the set of smooth  bounded functions $f:\Sigma \rightarrow \mathbb{R}$ with mean zero on $\Sigma$ and gradient in $L^2(\Sigma)$: 
\begin{eqnarray}\label{stability inequality 2}
I(f,f)=-\int_{\Sigma} f(\Delta f+ Ric(N)f+|A|^2\,f)\,d_{\Sigma}\geq 0.
\end{eqnarray}
Indeed, given such  $f$, we construct the function $f_{\varepsilon}= (\varphi_{\varepsilon}f)^+ - t_{\varepsilon}(\varphi_{\varepsilon}f)^-$ where $t_{\varepsilon}$ is a constant so that $f_{\varepsilon}$ has mean zero in $\Sigma$. Since support of $f_{\varepsilon}$ is in the regular part of $\Sigma$,  (\ref{stability inequality 2}) holds for $f_{\varepsilon}$. The claim now follows by taking $\varepsilon \rightarrow 0$ and applying Lemma \ref{cut-off}. 

The vector valued function $\Phi_{a,b}$ used in the proof of Theorem \ref{main theorem} is smooth and bounded on $\Sigma$; let us show that its gradient is in $L^2(\Sigma)$. By the Cauchy-Schwarz inequality, $|\nabla \langle \Phi_{a,b}, e_j\rangle|$ is  bounded from  above by linear combinations of the principal curvatures $k_i$ of $\Sigma$. Therefore, it suffices showing that $|A|\in L^2(\Sigma)$. As in \cite{MR}, let $u:\mathbb{RP}^{n+1}\rightarrow [0,1]$ be such that $u\equiv 1$ in a neighborhood $V$ of the singular set $\Sigma_0$, $\overline{\nabla}u$ bounded, and  $\int_{\Sigma} u\,d_{\Sigma}=0$. Since the stability inequality (\ref{stability inequality 2}) holds for $u$, we obtain  that
\[
\int_{V\cap \Sigma} (n+ |A|^2)\,d_{\Sigma} \leq \int_{\Sigma} |\nabla u|^2<\infty. 
\]
In particular, the index form (\ref{stability inequality 2}) is well defined on  each coordinate of $\Phi_{a,b}$. Moreover, the integration by parts (\ref{integration by parts 2}) follows directly from Lemma \ref{cut-off}. 

These remarks guarantee that the proof  of Theorem \ref{main theorem} extends to the singular setting except for a minor difference  at the rigidity analysis for $L\Phi_{a_0,b_0}=c$. Indeed, we are not allowed to pick the point of maximum for the function $r(x)=(|A|^2-n)(x)$ since it is necessarily infinite in the singular case. Nonetheless, the argument up to that point implies that $c=nH\, b_0-r(x_1)a_0$, where $x_1$ is the point of minimum. From (\ref{eq6}) we obtain
\[
\bigg(r(x)-r(x_1)\bigg)\langle x,a_0\rangle =0,
\]
for every $x\in \Sigma$. Therefore,  $r(x)=(|A|^2-n)(x)$ is constant and $\Sigma$  is a regular hypersurface.
\vspace{0.2cm}

\noindent The isoperimetric profile of $\mathbb{RP}^{n}$ is the function $I_{\mathbb{RP}^{n}}:[0,vol(\mathbb{RP}^{n})]\rightarrow \mathbb{R}$ defined by $I_{\mathbb{RP}^n}(v)= \inf\{|\partial \Omega|:\Omega \subset \mathbb{RP}^n\,\,and\,\, |\Omega|=v \}$. Conjecture \ref{conjecture} in the form below is due to Berger \cite{B}:
\begin{conjecture}[Berger \cite{B}]	The  isoperimetric profile of $\mathbb{P}^{n+1}(\mathbb{K})$ is  given by the perimeter of successive tubular neighborhoods of projective subspaces $\mathbb{P}^k(\mathbb{K})\subset \mathbb{P}^{n+1}(\mathbb{K})$.\end{conjecture}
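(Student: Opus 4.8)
\textit{Proof proposal.} The plan is to reduce the conjecture, case by case over $\mathbb{K}\in\{\mathbb{R},\mathbb{C},\mathbb{H}\}$, to two separate tasks: (i) a classification of compact two-sided volume preserving stable hypersurfaces in $\mathbb{P}^{n+1}(\mathbb{K})$, showing they are all distance tubes about projective subspaces $\mathbb{P}^k(\mathbb{K})$ (together with the degenerate totally geodesic $\mathbb{P}^{n}(\mathbb{K})$); and (ii) an explicit comparison, among the resulting candidates, of the perimeter as a function of the enclosed volume. For $\mathbb{K}=\mathbb{R}$ task (i) is exactly Theorem \ref{main theorem} together with Corollary \ref{corollary main result}; what remains in the real case is task (ii). For $\mathbb{K}\in\{\mathbb{C},\mathbb{H}\}$ one must first establish the analogue of Theorem \ref{main theorem}, and this is where the bulk of the work — and the main obstacle — lies.

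For the classification in the complex and quaternionic cases I would exploit the Riemannian submersions $\mathbb{S}^1\hookrightarrow \mathbb{S}^{2n+3}\to \mathbb{CP}^{n+1}$ and $\mathbb{S}^3\hookrightarrow \mathbb{S}^{4n+7}\to \mathbb{HP}^{n+1}$, with the Fubini--Study metric normalized so that the fibres are totally geodesic great subspheres. An isoperimetric hypersurface $\widetilde\Sigma\subset\mathbb{P}^{n+1}(\mathbb{K})$, being the regular part of a volume preserving stable CMC hypersurface, pulls back under the Hopf map to an $\mathbb{S}^1$- (resp.\ $\mathbb{S}^3$-) invariant, CMC, volume preserving stable hypersurface $\Sigma$ in the round sphere $\mathbb{S}^{2n+3}$ (resp.\ $\mathbb{S}^{4n+7}$). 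I would then reuse the machinery of Section \ref{section proof} in its ambient--linear form: the position vector $x$ and unit normal $N$ of $\Sigma\subset\mathbb{R}^{2n+4}$ (resp.\ $\mathbb{R}^{4n+8}$) still satisfy \eqref{fundamental equations}, the operators $L(\langle x,a\rangle x)$, etc., are still given by Lemma \ref{lemma 1}, and the balanced vector fields $\Phi_{a,b}$ of \eqref{definition vector field} are still admissible — one only runs the max--min scheme of Lemmas \ref{eq critical point}--\ref{infinite} with $SO(n+2)$ replaced by $U(n+2)$ (resp.\ $Sp(n+2)$) acting on the ambient space. This should produce a balanced pair $(a_0,b_0)$ with $I(\Phi_{a_0,b_0},\Phi_{a_0,b_0})=0$, and the rigidity analysis of the Assertion then forces $A^2+\beta_0 A-\mathrm{Id}\equiv 0$; hence $\Sigma$ is isoparametric in the sphere with at most two distinct principal curvatures and invariant under the fibre action. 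By the classification of such isoparametric hypersurfaces (see \cite{CR}) these are exactly the Hopf preimages of round subspheres $\mathbb{S}^{2k+1}$ (resp.\ $\mathbb{S}^{4k+3}$), i.e.\ the distance tubes about $\mathbb{P}^k(\mathbb{K})\subset\mathbb{P}^{n+1}(\mathbb{K})$, which yields the desired analogue of Theorem \ref{main theorem}.

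Granting the classification, task (ii) is essentially a one-variable calculus problem. Let $P_k$ denote the perimeter of the distance tube about $\mathbb{P}^k(\mathbb{K})$ as a function of enclosed volume, defined on the interval of volumes for which such a tube embeds (from the tube collapsing onto $\mathbb{P}^k(\mathbb{K})$ to its complement collapsing onto the dual $\mathbb{P}^{n-k}(\mathbb{K})$). Using the explicit Jacobi-field volume and area formulas for tubes in a rank-one symmetric space — elementary trigonometric integrals — I would show that each $P_k$ is smooth and strictly concave on its interval, that consecutive graphs $P_k$ and $P_{k+1}$ cross exactly once and transversally, and that at the crossing value the pointwise minimum of $\{P_j\}$ switches from $P_k$ to $P_{k+1}$. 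Then $I_{\mathbb{P}^{n+1}(\mathbb{K})}(v)=\min_k P_k(v)$ is the concatenation of successive arcs of $P_0,P_1,\dots$, which is exactly the conjectured profile; since by task (i) every isoperimetric region is such a tube, the inequality $I_{\mathbb{P}^{n+1}(\mathbb{K})}\ge\min_kP_k$ is automatic, and matching the minimizing index with the enclosed volume closes the argument.

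The main obstacle is task (i) for $\mathbb{K}\in\{\mathbb{C},\mathbb{H}\}$: although the ambient--linear algebra of Section \ref{section proof} survives the Hopf lift, the group acting on the ambient coordinates drops from $SO(N)$ to $U(n+2)$ or $Sp(n+2)$, so the supply of balanced test fields $\Phi_{a,\varphi(a)}$ shrinks and one must verify it is still rich enough for the mountain-pass scheme to produce a critical point; in particular the analogue of Lemma \ref{isomorphism} — injectivity of $b\mapsto\int_\Sigma\langle x,b\rangle\,N$ on the relevant subspace — is delicate, since the Obata-type alternative that rules out total geodesicity must now be excluded using the fibre symmetry and the geometry of the submersion. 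A secondary difficulty is the last step of the rigidity analysis, where passing from $A^2+\beta_0A-\mathrm{Id}=0$ to ``$\Sigma$ is a Hopf tube'' requires the classification of Hopf-invariant isoparametric hypersurfaces with two curvatures in $\mathbb{S}^{2n+3}$ and $\mathbb{S}^{4n+7}$. Finally, even for $\mathbb{K}=\mathbb{R}$, the concavity and single-crossing claims of task (ii) should be checked uniformly in $n$; it is conceivable they hold only in low dimensions, in which case the conjecture would be established only there, matching the status reported after Theorem \ref{main theorem}.
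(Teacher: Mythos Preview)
The statement you are trying to prove is a \emph{conjecture} in the paper, not a theorem: the paper does not prove it. What the paper establishes is Corollary~\ref{corollary main result} (the isoperimetric regions in $\mathbb{RP}^{n+1}$ are tubes about some $\mathbb{RP}^k$), which confirms Berger's conjecture for $\mathbb{K}=\mathbb{R}$ \emph{except for the word ``successive''}; that last monotonicity-in-$k$ claim is verified only numerically for $n\le 10$ (see the paragraph surrounding Figure~\ref{RP7}). For $\mathbb{K}\in\{\mathbb{C},\mathbb{H}\}$ the paper proves nothing and states the problem as open. So there is no ``paper's own proof'' to compare against, and your proposal is an attempted attack on an open problem.

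On the substance of your attack, the Hopf-lift strategy for $\mathbb{CP}^{n+1}$ and $\mathbb{HP}^{n+1}$ has a structural gap that does not seem repairable along the lines you indicate. The reason the test fields $\Phi_{a,b}=-\langle x,a\rangle x+\langle N,a\rangle N+\langle x,b\rangle N$ are admissible in the real case is that each coordinate is \emph{quadratic} in $(x,N)$ and hence invariant under the antipodal involution $x\mapsto -x$, $N\mapsto -N$; that is exactly what lets them descend to $\mathbb{RP}^{n+1}$ and be used in the stability inequality there. Under the Hopf $\mathbb{S}^1$-action $x\mapsto e^{i\theta}x$ (and the induced action on $N$), a product like $\langle x,a\rangle\langle N,b\rangle$ is \emph{not} invariant --- it transforms with weight $e^{2i\theta}$ --- so none of the $\Phi_{a,b}$ descend to $\mathbb{CP}^{n+1}$. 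Equivalently, the lifted hypersurface $\Sigma\subset\mathbb{S}^{2n+3}$ is volume preserving stable only among $\mathbb{S}^1$-invariant variations, and the $\Phi_{a,b}$ do not lie in that class. Restricting to $U(n+2)$ or $Sp(n+2)$ in the max--min does not help: the issue is the invariance of the test functions themselves, not of the parameter group. This is precisely why the real projective case is special, and why the complex/quaternionic cases remain genuinely open.

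For $\mathbb{K}=\mathbb{R}$, your task (ii) is the right remaining question, and you correctly flag that the concavity/single-crossing argument may fail for large $n$; the paper makes the same caveat and does not claim more than the numerical check.
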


\noindent Corollary \ref{corollary main result} confirms the conjecture in the real projective space $\mathbb{RP}^{n}$ except for the word successive. Numeric computations   verified this simple description of the profile   in the dimensions $n\leq 10$.  Figure \ref{RP7} highlights the word successive.

\begin{figure}[h]
	\begin{center}		\includegraphics[scale=0.20]{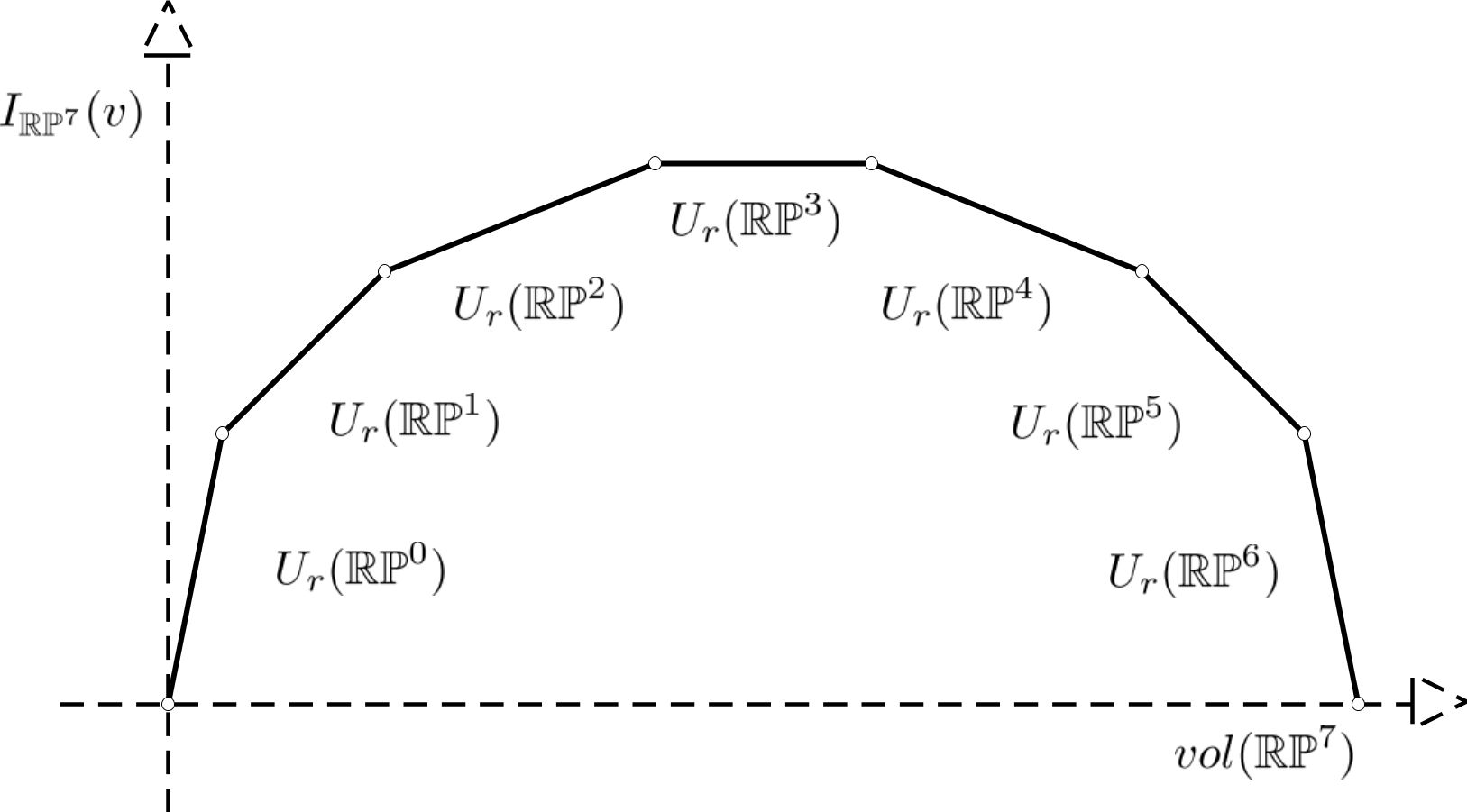}\\	\caption{Isoperimetric profile of $\mathbb{RP}^7$}\label{RP7}	
	\end{center}
\end{figure}

\section{Proof of Theorem \ref{willmore energy}}

\noindent  \textit{Arithmetic Mean - Geometric Mean inequality}: If $a_1,\ldots, a_{n}$ are positive numbers, then
\begin{eqnarray}\label{AGinequality}
\sqrt[n]{a_1\cdots a_n}\leq \bigg(\frac{a_1+ \ldots +a_n}{n}\bigg).
\end{eqnarray}
The equality  holds if, and only if, all the $a_i$'s are equal.

\begin{proof}[Proof of Theorem \ref{willmore energy}]
Consider the map $F_t: \Sigma \rightarrow \mathbb{S}^{n+1}$ given by $F(x,t)=\cos(t)x+ \sin(t)N(x)$. Given $p\in \Sigma$, take an orthonormal basis of $T_p\Sigma$ by principal directions. If $\{\lambda_1,\ldots,\lambda_{n}\}$ are the principal curvatures, then the Jacobian of the map $F_t$ is 
\[
Jac(F_t)= \prod_{i=1}^{n}\bigg(\cos(t)- \lambda_i\sin(t)\bigg).
\]
Assume for the moment that for each $i$  we have $\cos(t)-\lambda_i\sin(t)>0$. By the Arithmetic Mean - Geometric Mean Inequality (\ref{AGinequality}), we have:
\begin{eqnarray}\label{eq1}
Jac(F_t)\leq \bigg(\cos(t)-H\sin(t) \bigg)^{n}\leq \bigg(1+H^2\bigg)^{\frac{n}{2}}.
\end{eqnarray}
The last inequality follows from the Cauchy-Schwarz inequality applied to the vectors $\overrightarrow{u}=(\cos(t),\sin(t))$ and $\overrightarrow{v}=(1,-H)$. 

Let $U$ and $V$ be such that $\mathbb{S}^{n+1}\backslash\Sigma=U\cup V$. For any $t\in [-\pi,\pi]$, we consider the hypersurfaces $\Sigma_{|t|}=\{p\in U: dist(p, \Sigma) = |t|\}$ and $\Sigma_{-|t|}=\{p\in V: dist(p, \Sigma) = |t|\}$. The distance between $\Sigma$ and a point in $\Sigma_t$ is  attained by a minimizing geodesic which meets the hypersurface $\Sigma$ orthogonally. This distance is not bigger than the cut function, $c : \Sigma\rightarrow \{0 \leq t<\pi\}$. Recall that  $c(p)$ is the last positive time, such that the normal geodesic $t\mapsto \cos(t) p +\sin(t) N(p)$ attains the distance to $\Sigma$.
Thus, we have that $\Sigma_t=\{\cos(t)\,p + \sin(t)\, N(p): p \in \Sigma,\, c(p)\geq t\}$. We also have that $c(p)$ is less than or equal to the first focal value along the normal geodesic at $p$. Hence, for any  $0 \leq t \leq c(p)$, the Jacobian at $p$, $Jac(F_t)(p)$, of the map $F_t(p)=\cos(t)p + \sin(t) N(p)$ is nonnegative. Therefore,
\begin{eqnarray}\label{eq2}
\Sigma_t\subset F_t(\{p\in \Sigma: Jac(F_t)\geq 0 \}).
\end{eqnarray}
It follows from  (\ref{eq1}) and (\ref{eq2}) that
\begin{eqnarray}\label{eq4}
|\Sigma_t|\leq  \int_{\Sigma} \bigg( 1+H^2 \bigg)^{\frac{n}{2}}\,d_{\Sigma}.
\end{eqnarray}
 By assumption $U$ and $V$ are  antipodal invariant. Hence,  $U_t$ and $V_t$ defined by $\mathbb{S}^{n+1}\backslash \Sigma_t=U_t\cup V_t$ are also antipodal invariant. Moreover, the projection of  $\{\Sigma_t\}_{t\in [-\pi,\pi]}$ in $\mathbb{RP}^{n+1}$  provides an admissible class of sweep out to run the Almgren-Pitts Min-Max Theory. Hence,  there exists $t_0\in (-\pi, \pi)$ such that 
\begin{eqnarray}\label{eq5}
|\Sigma_{t_0}|\geq W(\mathbb{RP}^{n+1}).
\end{eqnarray}
Combining (\ref{eq4}), (\ref{eq5}), and the claim below, we obtain the  desired inequality (\ref{Willmore}).
   \begin{center}
	\textbf{Claim}. $W(\mathbb{RP}^{n+1})= \bigg| \mathbb{S}^{\lfloor\frac{n}{2}\rfloor}\bigg(\sqrt{\frac{\lfloor\frac{n}{2}\rfloor}{n}}\bigg)\times \mathbb{S}^{\lceil\frac{n}{2}\rceil}\bigg(\sqrt{\frac{\lceil\frac{n}{2}\rceil}{n}}\bigg)\bigg|$.
\end{center}
   If we have  equality in (\ref{Willmore}), then    (\ref{eq4}) and (\ref{eq1}) become equalities. The equality in (\ref{eq1})  occurs if, and only if, $\Sigma$ is totally umbilical or   both  $t_0=0$ and $H=0$. By assumption, $\Sigma$ is not totally umbilical and, hence,  $\Sigma$ is  minimal  and $t_0=0$. Since we also have equality in (\ref{eq5}), $\Sigma$ is congruent to $ \mathbb{S}^{\lfloor\frac{n}{2}\rfloor}\bigg(\sqrt{\frac{\lfloor\frac{n}{2}\rfloor}{n}}\bigg)\times \mathbb{S}^{\lceil\frac{n}{2}\rceil}\bigg(\sqrt{\frac{\lceil\frac{n}{2}\rceil}{n}}\bigg)$.

 \noindent  Proof of claim: by Zhou \cite{Z}, the width $W(\mathbb{RP}^{n+1})$ is realized by either a multiplicity one two sided index one  minimal hypersurface $\Sigma$  or a multiplicity two one sided      minimal hypersurface $\hat{\Sigma}$. As usual, $\Sigma$ and $\hat{\Sigma}$ are smooth away for a closed set of Hausdorff dimension $n-7$.
 By the result of do Carmo, Ritor\'{e}, and Ros \cite{dCRR}, the only possibility for the former is a Clifford hypersurface; the proof  still holds in the singular case by the cut-off argument of Morgan and Ritor\'{e}\cite{MR} discussed in the previous section. In  the latter case we  have  $|\hat{\Sigma}|\geq \mathbb |\mathbb{RP}^n|$. The claim now follows from the   inequalities:\begin{equation}\label{volume spheres}2 |\mathbb{S}^{n}|> \bigg|\mathbb{S}^p\bigg(\sqrt{\frac{p}{n}}\bigg)\times \mathbb{S}^{n-p}\bigg(\sqrt{\frac{n-p}{n}}\bigg)\bigg|\geq \bigg|\mathbb{S}^{\lfloor\frac{n}{2}\rfloor}\bigg(\sqrt{\frac{\lfloor\frac{n}{2}\rfloor}{n}}\bigg)\times \mathbb{S}^{\lceil\frac{n}{2}\rceil}\bigg(\sqrt{\frac{\lceil\frac{n}{2}\rceil}{n}}\bigg) \bigg|.\end{equation}
 To prove these inequalities, we argue as in Stone \cite{St}. We consider the function $f(p)=|\mathbb{S}^p||\mathbb{S}^{n-p}|(\frac{p}{n})^{\frac{p}{2}}(\frac{n-p}{n})^{\frac{n-p}{2}}$. Recall the standard formula for the volume of spheres in terms of the Gamma function $\Gamma$: \[|\mathbb{S}^d|= 2\pi^{\frac{d+1}{2}}\,/\, \Gamma\bigg(\frac{d+1}{2}\bigg).\] 
 Note that $f(x)=f(n-x)$ and  $\lim_{x\rightarrow 0}f(x)=2|\mathbb{S}^n|$. Our goal is to show that $f(x)$ is concave up on the interval $[0,n]$.  It is convenient to consider the function $\overline{f}(x)=\ln(f(x))$ instead. A computation gives
 \begin{eqnarray*}
 \overline{f}(x)=  2\ln(2) +\frac{n+2}{2}\ln(\pi) +\frac{x}{2}\ln(\frac{x}{n}) + \frac{n-x}{2}\ln(\frac{n-x}{n})\\
 -\ln\bigg(\Gamma(\frac{x+1}{2})\bigg)-\ln\bigg(\Gamma(\frac{n-x+1}{2})\bigg).
 \end{eqnarray*}
Using the identity $(\Gamma^{\prime}(t)/\Gamma(t))^{\prime}=\sum_{l=0}^{\infty}(t+l)^{-2}$, we compute:\begin{eqnarray*}	\overline{f}^{\prime\prime}&=& -\frac{1}{4}\bigg(\Gamma^{\prime}( \frac{x+1}{2})/\Gamma( \frac{x+1}{2})\bigg)^{\prime}- \frac{1}{4}\bigg(\Gamma^{\prime}(\frac{n-x+1}{2})/ \Gamma(\frac{n-x+1}{2})\bigg)^{\prime}\\
&& +\frac{1}{2x} +\frac{1}{2(n-x)} \\&=& - \frac{1}{4}\sum_{l=0}^{\infty}\frac{1}{(\frac{x+1}{2}+l)^2} - \frac{1}{4}\sum_{l=0}^{\infty}\frac{1}{(\frac{n-x+1}{2}+l)^2}+\frac{1}{2x} +\frac{1}{2(n-x)}.\end{eqnarray*}
As observed in \cite{St}, we have that $\sum_{l=0}^{\infty}\frac{1}{(x+\frac{1}{2}+l)^2}< \frac{1}{x}$. Applying this inequality to the expression above, we obtain $\overline{f}^{\prime\prime}(x)> 0$. This implies  $f^{\prime\prime}(x)>0$ and consequently   (\ref{volume spheres}).
\end{proof}

\end{document}